\documentclass[a4 paper]{article}

\usepackage{moreverb}

\usepackage[colorlinks,bookmarksopen,bookmarksnumbered,
             citecolor=red,urlcolor=red]{hyperref}

\usepackage{graphicx}        
\usepackage{multicol}        
\usepackage[bottom]{footmisc}
\usepackage{bm}
\usepackage{amsmath}
\usepackage{amssymb}
\usepackage{caption}
\usepackage[noadjust]{cite}

 \newtheorem{thm}{Theorem}
 \newtheorem{cor}[thm]{Corollary}
 \newtheorem{lem}[thm]{Lemma}
 \newtheorem{prop}[thm]{Proposition}
 \newtheorem{defn}[thm]{Definition}
 \newtheorem{rmk}[thm]{Remark}
 
 \newenvironment{proof}{\par\noindent{\bf Proof.}}{\par\rightline{$\blacksquare$}}



       \newcommand{\be}{\begin{equation}}
       \newcommand{\ee}{\end{equation}}



\newcommand\BibTeX{{\rmfamily B\kern-.05em \textsc{i\kern-.025em b}\kern-.08em
T\kern-.1667em\lower.7ex\hbox{E}\kern-.125emX}}

\begin{document}

\author{R. Ghomrasni and R. \L{}ochowski}

\title{The Play Operator, the Truncated Variation and the Generalisation
of the Jordan Decomposition}

\author{R. Ghomrasni and R. \L{}ochowski}



\maketitle

\vspace{-6pt}

\begin{abstract}
The play operator minimalizes the total variation on intervals $[0,T], T> 0$,
of functions approximating uniformly given regulated function with given accuracy and starting from a given point. In this article we link the play operator with so called truncated variation functionals, introduced recently by the second-named author, and provide a semi-explicit expression for the play operator in terms of these functionals. Generalisation for time-dependent boundaries is also considered. This gives the best possible lower bounds for the total variation of the outputs of the play operator and its Jordan-like decomposition.
\end{abstract}

\section{Introduction}

The so called play operator is an important non-linear operator encountered in mathematical models of hysteresis \cite{BerM},
\cite{KP89}. It also appears in the models on optimal hedging of
options with transaction costs \cite{WW97}, \cite{WW99}, \cite{Za06}.
The intuition leading to the play operator may be stated as follows:
for a given input function we look for an output function, starting
value of which is given and we do no change its value as long as the
difference between input and output stays within a given boundary
region. Several definitions of the play operator are possible, depending
on the regularity of the input, the space in which the input and output
attain their values and the boundary conditions (characteristics)
\cite{KL2}, \cite{R2007}, \cite{ChK92}, \cite{IE}. It may be defined for sufficiently regular inputs and boundary conditions (characteristics) e.g. with
the Young or Kurzweil integral formalism \cite{KL2}, \cite{BK}.
In \cite{BK} one finds the following definition of the play operator.
Let $X$ be a Hilbert space with inner product $\left\langle \cdot,\cdot\right\rangle $
and $Z$ a (not necessarily bounded) convex, closed subset
of $X,$ containing a ball with the center at $0$ and with radius
$r>0.$ For a given initial condition $x_{0}\in Z$ and an input function
$u:\left[a;b\right]\rightarrow X,$ which is c\`{a}gl\`{a}d, i.e. left-continuous
with right limits, we look for a c\`{a}gl\`{a}d, bounded variation solution
$\xi:\left[a;b\right]\rightarrow X$ of the following problem $({\cal P}):$
\begin{enumerate}
\item $u\left(t\right)-\xi\left(t\right)\in Z$ for every $t\in\left[a;b\right];$
\item $u\left(a\right)-\xi\left(a\right)=x_{0};$
\item $(K)\int_{a}^{b}\left\langle u\left(t+\right)-\xi\left(t+\right)-y\left(t\right),\mathrm{d}\xi\left(t\right)\right\rangle \geq0$
for any regulated function $y:\left[a;b\right]\rightarrow Z.$
\end{enumerate}
The integral $(K)\int$ is understood as the Kurzweil integral and a
regulated function is a function with right and left limits. The problem
$({\cal P})$ admits a unique solution $\xi,$ which is interpreted
as the output of the play operator with the input $u.$

The play operator may be also defined for time-dependent characteristics
(which we will call sometimes boundary conditions or boundaries) \cite{ChK92},
\cite{K1} or for more general inputs, like $L^{\infty}\left(\left[a;b\right]\right)$
or even Lebesgue measurable functions \cite{KL1}, \cite{IE}, where
the Kurzweil integral formalism can not be applied in a straightforward
way. 

In this article we will restrict to one-dimensional regulated inputs
and regulated, time-dependent characteristics. Possible applications are encountered in real
industry problems. 
For example,
in \cite{Za06} we find the following description of a hedging strategy
for a European call option with transaction costs: ``the numerical
calculations show that the optimal hedge ratio $\Delta$ is constrained
to evolve between two boundaries, $\Delta_{l}$ and $\Delta_{u},$
such that $\Delta_{l}<\Delta_{u}.$ As long as the hedge lies within
these two boundaries, $\Delta_{l}\leq\Delta\leq\Delta_{u},$ no rebalancing
of the hedging portfolio takes place. That is why the region between
the two boundaries is commonly denoted as the 'no transaction region'.
As soon as the hedge ratio goes out of the no transaction region,
a rebalancing occurs in order to bring the hedge to the nearest boundary
of the no transaction region.'' In the above example the input is the optimal hedge ratio, which, similar as boundaries, is a c\`{a}dl\`{a}g process. This stems from the fact that these processes shall be "non-anticipating", reflecting the fundamental assumption of financial mathematics models that one can not predict with full certainty future movements of the market.

In this article we link the play operator with functionals called
$\alpha,\beta-$truncated variation and truncated variation (denoted $TV^c,$ with $c>0$); the latter is the special case of the former, with $\alpha \equiv -c/2$ and $\beta \equiv c/2$. This
link stems from the so called laziness principle of the play operator.
Truncated variation functionals give the greatest lower bound for the
total variation of any function $\xi$ such that the difference between
$\xi$ and a given function $u$ lies between $\alpha$ and $\beta,$
and may be applied for general regulated inputs and characteristics
$\alpha\leq\beta$ such that the output has locally bounded variation.
It seems that $\alpha,\beta-$truncated variation formalism lies in
between the Kurzweil (cf. \cite{K1}) or the Lebesgue-Stieltjes integral
formalism (cf. \cite{R2008}) and the standard approximation arguments,
for more general inputs and boundaries. This does not extend much
the definition of the play operator beyond the results of \cite[Corollary 2.3]{K1} but it gives better insight when the output has locally bounded variation. Moreover, it gives (the best possible) lower bounds of the total variation of the output and its Jordan-like decomposition.
 
In \cite{L1}, the best possible lower bound  (\ref{eq:inf_tv1}) of the total variation of the output of the play operator for a c\`{a}dl\`{a}g (right-continuous with left limits) input and constant, symmetric boundaries was given. For every $c,t>0$ and every regulated input $u,$ the output of the
play operator with constant, symmetric boundaries $-c/2,$ $c/2$
has locally finite total variation which, due to estimates (\ref{eq:inf_tv1})
and (\ref{eq:sup_tv1}), is comparable for small $c$s with $TV^{c}\left(u,\left[0;t\right]\right).$
On the other hand, as $c\downarrow0,$ its variation tends to the
total variation of the input. The natural question arises, what
is the rate of the convergence of $TV^{c}\left(u,\left[0;t\right]\right)$
to $+\infty,$ when $TV\left(u,\left[0;t\right]\right)=+\infty.$
A partial answer to this question was given by \cite[Theorem 17]{TV00}.
In \cite{TV00}, for $p\geq1$ and $T>0$ the following classes are
defined: $\mathcal{V}_{p}$ is the class of functions $\psi:\left[0;T\right]\rightarrow\mathbb{R}$
with finite $p-$variation 
\begin{equation}
V^{p}\left(\psi,\left[0;T\right]\right):=\sup_{n}\sup_{0\leq t_{0}<t_{1}<\cdots<t_{n}\leq T}\sum_{i=1}^{n}\left|\psi\left(t_{i}\right)-\psi\left(t_{i-1}\right)\right|^{p}\label{eq:p_var}
\end{equation}
and $\mathcal{U}_{p}$ is the class of such functions $\psi$ that
$\limsup_{c\downarrow0}c^{p-1}TV^{c}\left(\psi,\left[0;T\right]\right)$
is greater than $0$ but finite. In \cite{TV00} it was shown that
for any $p\geq1$ and $\delta>0$ we have inclusions $\mathcal{V}_{p}\subset U_{p}\subset\mathcal{V}_{p+\delta}$
and for $p>1$ these inclusions are strict. 
In \cite{TV00} it was also mentioned, that due to the fact that typical
path of a standard Brownian motion $B_{t},$ $t\geq0,$ has, with probability
$1,$ finite $2-$variation, the variations $TV^{c}\left(B,\left[0;T\right]\right)$
have the growth rate $1/c$ as $c\downarrow0$ with probability $1.$
It is not true that with probability $1,$ $B\in\mathcal{V}_{2}$
when $p-$variation is defined with formula (\ref{eq:p_var}), cf.
\cite{Levy}, but the conclusion remains true. In \cite[Theorem 1]{LM13}
more general fact was shown - that for any continuous semimartingale
$X_{t},$ $t\geq0,$ the function $T\mapsto c\cdot TV^{c}\left(X,\left[0;T\right]\right),$
$T\geq0,$ almost surely converges uniformly on compact subsets of
$\left[0;+\infty\right)$ to the quadratic variation $\left\langle X\right\rangle _{T}$ of $X,$ 
which, in the case of a standard Brownian motion equals $T.$ More
accurate results were obtained for diffusions (cf. \cite[Theorem 5]{LM13}). 

A very interesting tree-approach of \cite{P08} allows to obtain estimates
of the total variation of the output of the play operator for inputs
being typical paths of fractional Brownian motions and L\'evy processes.
Fractional Brownian motion with Hurst parameter $H\in\left(0;1\right)$
is a self-similar, continuous, Gaussian process $B_{t}^{H},$ $t\geq0,$
such that for $\lambda>0,$ $\lambda^{-H}B_{\lambda t}^{H},$ $t\geq0,$
has the same probability distribution as $B^{H}.$ The easy consequence
of \cite[Proposition 3.7]{P08} is that the variations $TV^{c}\left(B^{H},\left[0;T\right]\right)$
have, with probability $1,$ the growth rate $c^{1-1/H}$ as $c\downarrow0.$ The quantity $L^a$ appearing in \cite[Proposition 3.7]{P08} differs from $\frac{1}{2}TV^{a}\left(B^{H},\left[0;T\right]\right)$ at most by $\sup_{t\in[0;T]}B^{H}_t - \inf_{t\in[0;T]}B^{H}_t$ which is finite with probability $1.$
The case of L\'evy processes is studied in \cite[Proposition 3.14]{P08}
and this gives e.g. estimates of $TV^{c}$ for $\alpha-$stable processes.
For $\alpha\in\left(0;2\right],$ $\alpha-$stable L\'evy process is
a self-similar, c\`{a}dl\`{a}g process $X_{t}^{\alpha},$ $t\geq0,$ such that
for $\lambda>0,$ $\lambda^{-1/\alpha}X_{\lambda t}^{\alpha},$ $t\geq0,$
has the same probability distribution as $X_{t}^{\alpha},$ $t\geq0.$
The consequence of \cite[Proposition 3.14]{P08} is that the variations
$TV^{c}\left(X^{\alpha},\left[0;T\right]\right)$ have for $\alpha > 1$,
with probability $1,$ the growth rate $c^{1-\alpha}$ as $c\downarrow 0$ (see \cite[Formula (3.3)]{P08} and the discussion after the proof of \cite[Proposition 3.14]{P08}).
The mentioned results for self-similar processes may be also obtained
via the ergodic theorem. 

We also have the following simple general observation allowing to assess
the rate of convergence of $TV^{c}\left(\psi_{1}+\psi_{2},\left[0;T\right]\right)$
as $c\downarrow0,$ knowing the rates of convergence of $TV^{c}\left(\psi_{1},\left[0;T\right]\right)$
and $TV^{c}\left(\psi_{2},\left[0;T\right]\right).$ More precisely,
we have 
\begin{lem} \label{lema1}
Let $\psi_{1},\psi_{2}:\left[0;+\infty\right)\rightarrow\mathbb{R}.$
If there exists a non-decreasing, regularly varying function at
$0,$ $\varphi,$ such that 
\[
\lim_{c\downarrow0}\varphi\left(c\right)\cdot TV^{c}\left(\psi_{1},\left[0;t\right]\right)>0\mbox{ but }\limsup_{c\downarrow0}\varphi\left(c\right)\cdot TV^{c}\left(\psi_{2},\left[0;t\right]\right)=0
\]
then we also have 
\[
\lim_{c\downarrow0}\varphi\left(c\right)\cdot TV^{c}\left(\psi_{1}+\psi_{2},\left[0;t\right]\right)=\lim_{c\downarrow0}\varphi\left(c\right)\cdot TV^{c}\left(\psi_{1},\left[0;t\right]\right).
\]
\end{lem}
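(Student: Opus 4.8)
The plan is to deduce the statement from a single subadditivity property of the truncated variation together with the defining scaling of a regularly varying function. The property I would isolate first is that, for all functions $f,g$ and all $c_{1},c_{2}>0$,
\[
TV^{c_{1}+c_{2}}(f+g,[0;t])\le TV^{c_{1}}(f,[0;t])+TV^{c_{2}}(g,[0;t]),\qquad TV^{c}(-g,[0;t])=TV^{c}(g,[0;t]).
\]
The inequality is immediate from the characterisation of $TV^{c}$ recalled in the introduction, namely that $TV^{c}(f,[0;t])$ is the greatest lower bound for the total variation of functions staying within $c/2$ of $f$: if $g_{1},g_{2}$ approximate $f,g$ uniformly within $c_{1}/2,c_{2}/2$, then $g_{1}+g_{2}$ approximates $f+g$ within $(c_{1}+c_{2})/2$ and $TV(g_{1}+g_{2})\le TV(g_{1})+TV(g_{2})$, so taking infima yields the bound. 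It can equally be read off from the representation $TV^{c}(f,[0;t])=\sup\sum_{i}\max\{|f(t_{i})-f(t_{i-1})|-c,0\}$ via $\max\{a+b,0\}\le\max\{a,0\}+\max\{b,0\}$. The identity $TV^{c}(-g)=TV^{c}(g)$ holds because $TV^{c}$ depends only on the absolute increments.

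Write $L:=\lim_{c\downarrow0}\varphi(c)TV^{c}(\psi_{1},[0;t])\in(0,\infty]$. Since $\varphi(c)TV^{c}(\psi_{2},[0;t])\ge0$ and its limit superior vanishes, in fact $\lim_{c\downarrow0}\varphi(c)TV^{c}(\psi_{2},[0;t])=0$. Because $\varphi$ is regularly varying at $0$, there is a finite index $\rho$ (nonnegative, as $\varphi$ is non-decreasing) with $\lim_{c\downarrow0}\varphi(\lambda c)/\varphi(c)=\lambda^{\rho}$ for every $\lambda>0$, so that $\lambda^{\pm\rho}\to1$ as $\lambda\to1$. For the upper bound I fix $\theta\in(0,1)$ and apply the subadditivity inequality with $c_{1}=\theta c$, $c_{2}=(1-\theta)c$, $f=\psi_{1}$, $g=\psi_{2}$:
\[
TV^{c}(\psi_{1}+\psi_{2},[0;t])\le TV^{\theta c}(\psi_{1},[0;t])+TV^{(1-\theta)c}(\psi_{2},[0;t]).
\]
Multiplying by $\varphi(c)$ and writing $\varphi(c)TV^{\theta c}(\psi_{1})=\frac{\varphi(c)}{\varphi(\theta c)}\,\varphi(\theta c)TV^{\theta c}(\psi_{1})$, and similarly for the second term, the right-hand side tends to $\theta^{-\rho}L+(1-\theta)^{-\rho}\cdot0=\theta^{-\rho}L$ as $c\downarrow0$. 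Hence $\limsup_{c\downarrow0}\varphi(c)TV^{c}(\psi_{1}+\psi_{2},[0;t])\le\theta^{-\rho}L$, and letting $\theta\uparrow1$ gives the bound $\le L$.

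For the lower bound I decompose $\psi_{1}=(\psi_{1}+\psi_{2})+(-\psi_{2})$ and apply the inequality with the same thresholds, using $TV^{(1-\theta)c}(-\psi_{2})=TV^{(1-\theta)c}(\psi_{2})$, to get
\[
TV^{c}(\psi_{1},[0;t])\le TV^{\theta c}(\psi_{1}+\psi_{2},[0;t])+TV^{(1-\theta)c}(\psi_{2},[0;t]).
\]
Rearranging to $TV^{\theta c}(\psi_{1}+\psi_{2})\ge TV^{c}(\psi_{1})-TV^{(1-\theta)c}(\psi_{2})$ and multiplying by $\varphi(\theta c)$, the same ratio device shows the right-hand side tends to $\theta^{\rho}L-0=\theta^{\rho}L$ as $c\downarrow0$; since $\theta c\downarrow0$ ranges over the same scales, this gives $\liminf_{s\downarrow0}\varphi(s)TV^{s}(\psi_{1}+\psi_{2},[0;t])\ge\theta^{\rho}L$, and $\theta\uparrow1$ yields $\ge L$. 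Combining the two bounds gives $\lim_{c\downarrow0}\varphi(c)TV^{c}(\psi_{1}+\psi_{2},[0;t])=L$, with the evident reading when $L=\infty$.

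The routine parts are the two elementary inequalities and the sign identity; the only point requiring care is the regular-variation bookkeeping, namely keeping the argument of $\varphi$ tied to the argument of $TV^{\cdot}$ so that the hypotheses $\varphi(\cdot)TV^{\cdot}(\psi_{1})\to L$ and $\varphi(\cdot)TV^{\cdot}(\psi_{2})\to0$ may be invoked at the perturbed scales $\theta c$ and $(1-\theta)c$. Splitting the threshold $c$ as $\theta c+(1-\theta)c$ is precisely the device that makes subadditivity interact with the ratios $\varphi(\lambda c)/\varphi(c)\to\lambda^{\rho}$; the errors coming from $\psi_{2}$ are annihilated in the limit, and sending $\theta\uparrow1$ removes the constants $\theta^{\pm\rho}$.
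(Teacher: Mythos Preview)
Your proof is correct and follows essentially the same route as the paper's: both establish the subadditivity inequality $TV^{c}(\psi_{1}+\psi_{2})\le TV^{\delta c}(\psi_{1})+TV^{(1-\delta)c}(\psi_{2})$ from the elementary pointwise estimate, apply it in both directions (to $\psi_{1}+\psi_{2}$ and to $\psi_{1}=(\psi_{1}+\psi_{2})-\psi_{2}$), multiply by $\varphi(c)$, invoke the regular-variation ratio $\varphi(c)/\varphi(\delta c)\to\delta^{-\rho}$, and send $\delta\uparrow 1$. One small caveat: your first justification of the subadditivity via the ``greatest lower bound'' characterisation of $TV^{c}$ is stated in the paper only for regulated $\psi$, so for the lemma as written (with arbitrary $\psi_{1},\psi_{2}$) you should rely on your second, direct argument from the definition.
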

The proof of Lemma \ref{lema1} is given in the Appendix. 

Let us comment on the organisation of the paper. In the next section
we define play operator for any regulated input and constant, symmetric
boundaries, and present its relation with truncated variation. Simultaneously to presenting the play operator, we also report on
a very similar notion of the Skorohod map and its extensions, acting on the set of c\`{a}dl\`{a}g functions. In
the third section we extend the definition of the play operator with constant, symmetric boundaries to the case with time-dependent
boundaries and finite variation output, and relate it to (defined there) $\alpha,\beta-$ truncated variation. This relation is the subject of the main result of this paper - Theorem \ref{cond_tv_alpha_beta}. The last section is devoted to the definition of the play operator for any regulated function and regulated characteristics. In the Appendix we present the proof of  Lemma \ref{lema1}.

\section{The play operator with constant, symmetric
boundaries and the Skorohod problem}

As it was mentioned in the introduction, there exist several definitions
of the play operator, depending on the regularity of the input, the
space in which the input and output attain their values, and its characteristics.
In this section we will consider the play operator acting on one-dimensional
regulated inputs with constant, symmetric characteristics (boundaries)
$-c/2,$ $c/2$ where $c>0.$ 

We start with necessary definitions and notation. Let $\psi:[0;+\infty)\rightarrow\mathbb{R}.$
By $TV\left(\psi,\left[a;b\right]\right)$ we denote the \emph{total
variation} of the function $\psi$ on the interval $\left[a;b\right],$
$0\leq a<b<+\infty,$ i.e. 
\begin{equation}
TV\left(\psi,\left[a;b\right]\right):=\sup_{n}\sup_{a\leq t_{0}<t_{1}<\cdots<t_{n}\leq b}\sum_{i=1}^{n}\left|\psi\left(t_{i}\right)-\psi\left(t_{i-1}\right)\right|.\label{eq:tv}
\end{equation}
We also define \emph{positive and negative variations} of the function
$\psi$ on the interval $\left[a;b\right],$ $UTV\left(\psi,\left[a;b\right]\right)$
and $DTV\left(\psi,\left[a;b\right]\right)$ respectively, with the
following formulas 
\[
UTV\left(\psi,\left[a;b\right]\right):=\sup_{n}\sup_{a\leq t_{0}<t_{1}<\cdots<t_{n}\leq b}\sum_{i=1}^{n}\left(\psi\left(t_{i}\right)-\psi\left(t_{i-1}\right)\right)_{+},
\]
\[
DTV\left(\psi,\left[a;b\right]\right):=\sup_{n}\sup_{a\leq t_{0}<t_{1}<\cdots<t_{n}\leq b}\sum_{i=1}^{n}\left(\psi\left(t_{i-1}\right)-\psi\left(t_{i}\right)\right)_{+},
\]
where $\left(x\right)_{+}=\max\left\{ x,0\right\} .$

If $TV\left(\psi,\left[a;b\right]\right)<+\infty$ for any $0\leq a<b<+\infty$
we will call $\psi$ a {\em function with locally bounded variation}. For
such $\psi$ we have that $\psi$ is a \emph{regulated function},
i.e. it has left and right limits (with the convention that the left
limit at $0,$ $\psi\left(0-\right),$ equals $\psi\left(0\right)$),
and we have the following \emph{Jordan decomposition} 
\[
TV\left(\psi,\left[a;b\right]\right)=UTV\left(\psi,\left[a;b\right]\right)+DTV\left(\psi,\left[a;b\right]\right),
\]
\[
\psi\left(b\right)=\psi\left(a\right)+UTV\left(\psi,\left[a;b\right]\right)-DTV\left(\psi,\left[a;b\right]\right).
\]

Let $a\in\mathbb{R}.$ By $G[a;+\infty)$ we will denote the set of
real-valued regulated functions, defined on the interval $[a;+\infty),$
by $BV[a;+\infty)$ we will denote the subspace of $G[a;+\infty)$
consisting of functions of locally bounded variation and by $D[a;+\infty)$
we will denote the subspace of $G[a;+\infty)$ consisting of \emph{c\`{a}dl\`{a}g
functions}, i.e. right-continuous functions with left limits. For
$b>a$ by $\left\Vert \psi\right\Vert _{\left[a;b\right]},$ $\left\Vert \psi\right\Vert _{\left[a;b\right)}$
we will denote the semi-norms $\left\Vert \psi\right\Vert _{\left[a;b\right]}=\sup_{t\in\left[a;b\right]}\left|\psi\left(t\right)\right|,$
$\left\Vert \psi\right\Vert _{\left[a;b\right)}=\sup_{t\in\left[a;b\right)}\left|\psi\left(t\right)\right|.$

Now we define the play operator in the following way.

\begin{defn}\label{play_def} Let $c>0,$ $u\in G[0;+\infty)$ and
$\xi^{0}\in\left[u\left(0\right)-c/2;u\left(0\right)+c/2\right].$
There exists a unique, piecewise monotone function $\xi\in BV[0;+\infty)$
such that $\left\Vert \xi-u\right\Vert _{\left[0;+\infty\right)}\leq c/2,$
$\xi\left(0\right)=\xi^{0}$ and for any $t>0$ and $\chi:[0;+\infty)\rightarrow\mathbb{R}$
such that $\left\Vert \chi-u\right\Vert _{\left[0;t\right]}\leq c/2,$
$\chi\left(0\right)=\xi^{0},$ we have 
\[
TV\left(\xi,\left[0;t\right]\right)\leq TV\left(\chi,\left[0;t\right]\right).
\]
The map 
\[
G[0;+\infty)\ni u\mapsto\xi\in BV[0;+\infty)
\]
will be called the {\em play operator} and it will be denoted $\xi=\mathfrak{p}_{c/2}\left[\xi^{0},u\right].$
\end{defn}

The above definition stems from the laziness principle of the play
operator, firstly observed by V. Chernorutskii for continuous inputs,
namely that it associates with each function $u$ the function starting
from $\xi^{0}$ and of minimal total variation within the $c/2-$neighborhood
of $u$ in each subinterval $\left[0;t\right],$ $t>0$ of $[0;+\infty)$
(cf. \cite[Corollary 1.5]{K0}). The correctness of this definition
is guaranted by Corollary \ref{Play_UTVDTV}. The operator defined
in Corollary \ref{Play_UTVDTV} satisfies the conditions of Definition
\ref{play_def}. 

Now let us define 
\emph{truncated variation}, \emph{upward truncated variation} and
\emph{downward truncated variation} which, for a given $c>0,$ $0\leq a<b<+\infty$
and $\psi:[0;+\infty)\rightarrow\mathbb{R}$ are defined with the
formulas 
\[
TV^{c}\left(\psi,\left[a;b\right]\right):=\sup_{n}\sup_{a\leq t_{0}<t_{1}<\cdots<t_{n}\leq b}\sum_{i=1}^{n}\left(\left|\psi\left(t_{i}\right)-\psi\left(t_{i-1}\right)\right|-c\right)_{+},
\]
\[
UTV^{c}\left(\psi,\left[a;b\right]\right):=\sup_{n}\sup_{a\leq t_{0}<t_{1}<\cdots<t_{n}\leq b}\sum_{i=1}^{n}\left(\psi\left(t_{i}\right)-\psi\left(t_{i-1}\right)-c\right)_{+},
\]

\[
DTV^{c}\left(\psi,\left[a;b\right]\right):=\sup_{n}\sup_{a\leq t_{0}<t_{1}<\cdots<t_{n}\leq b}\sum_{i=1}^{n}\left(\psi\left(t_{i-1}\right)-\psi\left(t_{i}\right)-c\right)_{+}.
\]
\begin{rmk} It is easy to prove (cf. \cite[Fact 2.2]{L1}) that $\psi:\left[a;b\right]\rightarrow\mathbb{R}$
is regulated if and only if for every $c>0,$ $TV^{c}\left(\psi,\left[a;b\right]\right)<+\infty.$\end{rmk}
We have 
\begin{thm}\label{lowerBound-1} Let $\psi:[0;+\infty) \rightarrow \mathbb{R}$ and $0\leq a<b<+\infty.$
If $\chi:[0;+\infty)\rightarrow\mathbb{R}$ satisfies $\left\Vert \chi-\psi\right\Vert _{\left[a;b\right]}\leq c/2$
then 
\begin{equation} \label{a_lower_bound}
TV\left(\chi,\left[a;b\right]\right)\geq TV^{c}\left(\psi,\left[a;b\right]\right).
\end{equation}
Thus $TV^{c}\left(\psi,\left[a;b\right]\right)$ is a lower bound for
the total variation on the interval $\left[a;b\right]$ for all functions
$\chi:[0;+\infty)\rightarrow\mathbb{R}$ such that $\left\Vert \chi-\psi\right\Vert _{\left[a;b\right]}\leq c/2.$
Similarly 
\[
UTV\left(\chi,\left[a;b\right]\right)\geq UTV^{c}\left(\psi,\left[a;b\right]\right),\mbox{ }DTV\left(\chi,\left[a;b\right]\right)\geq DTV^{c}\left(\psi,\left[a;b\right]\right).
\]
Moreover, if $\psi\in G[a;+\infty)$ then there exists a piecewise monotone
function $\psi^{c}\in BV[a;+\infty)$ such that $\left\Vert \psi^{c}-\psi\right\Vert _{[a;+\infty)}\leq c/2$
and for any $t\geq a$ 
\begin{equation} \label{the_lower_bound}
TV\left(\psi^{c},\left[a;t\right]\right)=TV^{c}\left(\psi,\left[a;t\right]\right)<+\infty,
\end{equation}
\[
UTV\left(\psi^{c},\left[a;t\right]\right)=UTV^{c}\left(\psi,\left[a;t\right]\right),\mbox{ }DTV\left(\psi^{c},\left[a;t\right]\right)=DTV^{c}\left(\psi,\left[a;t\right]\right).
\]
\end{thm}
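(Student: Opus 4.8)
The statement splits into two parts: the three lower bounds, which I would dispatch in one stroke from the hypothesis $\left\Vert \chi-\psi\right\Vert_{[a;b]}\le c/2$, and the construction of an extremal function $\psi^c$ attaining them, which is the substantial part. For the lower bounds the key is a pointwise estimate: for any $a\le s<t\le b$ write $\chi(t)-\chi(s)=\bigl(\psi(t)-\psi(s)\bigr)+\bigl(\chi(t)-\psi(t)\bigr)-\bigl(\chi(s)-\psi(s)\bigr)$ and bound the last two terms by $c/2$ in absolute value; this gives $\chi(t)-\chi(s)\ge\psi(t)-\psi(s)-c$ and, symmetrically, $\chi(s)-\chi(t)\ge\psi(s)-\psi(t)-c$, hence $|\chi(t)-\chi(s)|\ge(|\psi(t)-\psi(s)|-c)_+$ (the truncation being legitimate since the left side is non-negative) and likewise $(\chi(t)-\chi(s))_+\ge(\psi(t)-\psi(s)-c)_+$. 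Summing over an arbitrary partition $a\le t_0<\cdots<t_n\le b$ and taking suprema turns these into (\ref{a_lower_bound}) and its $UTV$, $DTV$ analogues.

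For the extremal function I would construct $\psi^c$ as the \emph{lazy} path inside the moving band $[\psi-c/2;\psi+c/2]$: a piecewise monotone function that does not move while it lies strictly inside the band, is dragged by whichever boundary it touches, and turns around only at the instant it reaches the opposite boundary. Two points need care. First, the initial value $\psi^c(a)\in[\psi(a)-c/2;\psi(a)+c/2]$ must be fixed by the requirement that the equalities hold down to the left endpoint; this is the only anticipating ingredient of the construction and amounts to placing $\psi^c(a)$ at the appropriate boundary of the starting band, so that already the first monotone run of $\psi^c$ begins and ends with $\psi^c$ at distance exactly $c/2$ from $\psi$. Second, well-definedness on each compact interval --- in particular that there are only finitely many turning points there --- follows from $TV^c(\psi,[a;t])<+\infty$, which holds because $\psi$ is regulated (the Remark). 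By construction $\left\Vert \psi^c-\psi\right\Vert_{[a;+\infty)}\le c/2$, the function $\psi^c$ is piecewise monotone and of locally bounded variation, and the construction is consistent as $t$ grows.

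Granting the construction, the equalities in (\ref{the_lower_bound}) follow from a matching upper bound. Let $a=\theta_0<\theta_1<\cdots$ be the turning times of $\psi^c$ in $[a;t]$, with extremal values $e_j=\psi^c(\theta_j)$. At each turning point $\psi^c$ sits at distance $c/2$ from $\psi$, alternately on the two sides: at a local maximum $\psi(\theta_j)=e_j+c/2$ and at a local minimum $\psi(\theta_j)=e_j-c/2$. Hence on every step $|\psi(\theta_j)-\psi(\theta_{j-1})|=|e_j-e_{j-1}|+c$, so that $(|\psi(\theta_j)-\psi(\theta_{j-1})|-c)_+=|e_j-e_{j-1}|$; moreover an up-step of $\psi^c$ is an up-step of $\psi$ of the same truncated size (feeding $UTV^c$) and likewise for down-steps (feeding $DTV^c$). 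Evaluating the truncated-variation sums of $\psi$ on this particular partition therefore reproduces exactly $TV(\psi^c,[a;t])$, $UTV(\psi^c,[a;t])$ and $DTV(\psi^c,[a;t])$, which yields $TV(\psi^c,[a;t])\le TV^c(\psi,[a;t])$ together with the two one-sided analogues. Combined with the lower bounds of the first part applied to $\chi=\psi^c$, this gives the three equalities; the Jordan-like identity $TV^c=UTV^c+DTV^c$ then drops out from the ordinary Jordan decomposition $TV(\psi^c)=UTV(\psi^c)+DTV(\psi^c)$.

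The main obstacle is making the lazy construction rigorous for a general \emph{regulated} input, where $\psi$ may jump on either side and where small oscillations could a priori accumulate. Finiteness of $TV^c$ rules out accumulation of turning points, but the boundary-contact bookkeeping at the turns --- on which the whole upper bound rests --- must be stated with the correct left and right limits at jumps, and the selection of the initial boundary requires identifying the sign of the first excursion of $\psi$ of size exceeding $c$. I would handle these by first carrying out the construction and the turning-point computation for finite partitions and step functions, where everything reduces to a finite greedy selection of excursions exceeding $c$, and then passing to the regulated case by uniform approximation, using $TV^c(\psi,[a;t])<+\infty$ to control the limit; this is also where I would lean on the corresponding results of \cite{L1} for the c\`{a}dl\`{a}g case and extend them to arbitrary regulated $\psi$.
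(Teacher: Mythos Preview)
Your proposal is correct and follows essentially the same route as the paper. The pointwise inequality you give for the lower bounds is exactly the argument of Proposition~\ref{lowerBound} specialised to $\alpha\equiv-c/2$, $\beta\equiv c/2$; and your lazy construction, carried out first for step inputs and then transferred to regulated $\psi$ by uniform approximation, is precisely what the paper does in Step~1 and Step~2 of Theorem~\ref{cond_tv_alpha_beta} (from which Theorem~\ref{lowerBound-1} is obtained as the special case $\alpha\equiv-c/2$, $\beta\equiv c/2$, with piecewise monotonicity coming from Remark~\ref{gamma}).

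The one place where your presentation differs slightly is the upper bound $TV(\psi^c,[a;t])\le TV^c(\psi,[a;t])$: you read it off directly by evaluating the truncated sums of $\psi$ on the partition of turning times, using that $\psi^c$ sits at distance exactly $c/2$ from $\psi$ at each turn, whereas the paper bounds $TV(\psi^{\alpha,\beta})$ piecewise on the monotone runs and then invokes superadditivity of $TV^{\alpha,\beta}$ over abutting intervals. Your version is a touch more direct in the constant-boundary case; the paper's version is written to cover time-dependent $\alpha,\beta$ in one go. Your caveat about boundary-contact bookkeeping at jumps is well placed: in the paper this is exactly why the argument is run first on step functions (where the turning ``times'' are indices and the extremal values are genuinely attained) and only then lifted by approximation --- the same two-stage plan you outline.
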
 
From (\ref{a_lower_bound}) and (\ref{the_lower_bound}) we get that for $\psi\in G[a;+\infty)$ the truncated variation $TV^{c}\left(\psi,\left[a;b\right]\right)$ is {\em the greatest lower bound} for
the total variation on the interval $\left[a;b\right]$ for all functions
$\chi:[0;+\infty)\rightarrow\mathbb{R}$ such that $\left\Vert \chi-\psi\right\Vert _{\left[a;b\right]}\leq c/2.$
\begin{rmk}
From the last statement of Theorem \ref{lowerBound-1} and the Jordan decomposition we get that for any $\psi\in G[0;+\infty),$ $0\leq a <b$ and $c>0$
$$TV^{c}\left(\psi,\left[a;b\right]\right) = UTV^{c}\left(\psi,\left[a;b\right]\right) + DTV^{c}\left(\psi,\left[a;b\right]\right).$$
\end{rmk}
A special case of Theorem \ref{lowerBound-1}, when $\psi$ is a c\`{a}dl\`{a}g function, was proven in \cite{L1} (Theorem 3.6 and Theorem 4.1). Similar proof for regulated functions would require the consideration of many special cases; but here we omit the proof, since the first part of Theorem \ref{lowerBound-1} is a special 
case of Proposition \ref{lowerBound}, which will be proved in the
sequel and the second part follows from more general Theorem \ref{cond_tv_alpha_beta}
and Remark \ref{gamma}. 

From Theorem \ref{lowerBound-1}
and Definition \ref{play_def} for any $u\in G[0;+\infty)$ and $c,t>0$
we naturally have 
\begin{equation}
TV^{c}\left(u,\left[0;t\right]\right)=\inf_{\xi^{0}\in\left[u\left(0\right)-c/2;u\left(0\right)+c/2\right]}TV\left(\mathfrak{p}_{c/2}\left[\xi^{0},u\right],\left[0;t\right]\right)\label{eq:inf_tv1}
\end{equation}
and there exists appropriate starting value $\xi^{c}\in\left[u\left(0\right)-c/2;u\left(0\right)+c/2\right]$
such that for $u^{c}=\mathfrak{p}_{c/2}\left[\xi^{c},u\right]$ we
have 
\begin{equation}
TV^{c}\left(u,\left[0;t\right]\right)=TV\left(\mathfrak{p}_{c/2}\left[\xi^{c},u\right],\left[0;t\right]\right)=TV\left(u^{c},\left[0;t\right]\right),\label{eq:inf_tv2}
\end{equation}
and similar relations hold for $UTV$ and $DTV.$ On the other hand,
it is not difficult to prove (see Remark \ref{bounds}) that for any
given $\xi^{0}\in\left[u\left(0\right)-c/2;u\left(0\right)+c/2\right]$
\begin{equation}
TV\left(\mathfrak{p}_{c/2}\left[\xi^{0},u\right],\left[0;t\right]\right)\leq TV^{c}\left(u,\left[0;t\right]\right)+c\label{eq:sup_tv1}
\end{equation}
and similar relations hold for $UTV$ and $DTV.$ 

Formulas (\ref{eq:formula1}) and (\ref{eq:formula2}) show that it
is possible to invert the problem and using (upward, downward) truncated
variation represent the play operator for any regulated input $u\in G[0;+\infty).$
In fact $UTV^{-c^{0},c^{0}}$ and $DTV^{-c^{0},c^{0}}$ are upward
and downward truncated variations for more general than constant characteristics.
\subsection{The Skorohod problem}
When the input function of the play operator is regulated, then the
output, though with locally bounded variation, may still be only a
regulated function, no right- neither left-continuous. However, when
the input is a c\`{a}dl\`{a}g function then the output is also a c\`{a}dl\`{a}g function
$\xi$ with locally bounded variation. In such a case we may identify
the output with a finite, signed measure $\nu$ on Borel subsets of
$[0;+\infty)$ via the equality 
\[
\nu\left(a;b\right]=\xi\left(b\right)-\xi\left(a\right)
\]
for any $0\leq a\leq b,$ and denote $\nu=\mathrm{d}\xi.$ In this
case the output of the play operator is related to the \emph{Skorohod
map} on $\left[-c/2;c/2\right].$ The definition of the Skorohod map
on $\left[-c/2;c/2\right]$ is given below. 

Let $I[0;+\infty)$ denote
the set of real-valued, c\`{a}dl\`{a}g, non-decreasing functions, defined on
the interval $[0;+\infty).$

A pair of functions $\left(\phi,-\xi\right)\in D[0;+\infty)\times BV[0;+\infty)$
is said to be a solution of the {\em Skorohod problem} on $\left[-c/2,c/2\right]$
with starting condition  $\xi(0)=\xi^{0}$ for $u\in D[0;+\infty)$
if the following conditions are satisfied: 
\begin{enumerate}
\item for every $t\geq0,$ $\phi\left(t\right)=u\left(t\right)-\xi\left(t\right)\in\left[-c/2,c/2\right];$ 
\item $\xi=\xi_{u}-\xi_{d},$ where $\xi_{u},\xi_{d}\in I[0;+\infty)$ and
the corresponding measures $\mathrm{d}\xi_{u},$ $\mathrm{d}\xi_{d}$
are carried by $\left\{ t\geq0:\phi\left(t\right)=c/2\right\} $ and
$\left\{ t\geq0:\phi\left(t\right)=-c/2\right\} $ respectively; 
\item \textbf{$\xi(0)=\xi^{0}.$} 
\end{enumerate}
For $\xi^{0}\in\left[u\left(0\right)-c/2;u\left(0\right)+c/2\right]$
the Skorohod problem has a unique solution and we have $\xi=\mathfrak{p}_{c/2}\left[\xi^{0},u\right]$.
With the notation of Corollary \ref{Play_UTVDTV} we also have 
\[
\mathrm{d}\xi_{u}=\mathrm{d}UTV^{-c^{0},c^{0}}\left(u^{\xi^{0}},\left[0;\cdot\right]\right)\mbox{ and }\mathrm{d}\xi_{d}=\mathrm{d}DTV^{-c^{0},c^{0}}\left(u^{\xi^{0}},\left[0;\cdot\right]\right).
\]
Thus, for any measurable function $\Phi$ with $\left\Vert \Phi\right\Vert _{\left[0;+\infty\right)}\leq1$
\begin{equation}
\int_{0}^{\cdot}\Phi\mathrm{d}\xi\leq TV\left(\xi,\left[0;\cdot\right]\right)=TV^{-c^{0},c^{0}}\left(u^{\xi^{0}},\left[0;\cdot\right]\right)\label{eq:variation}
\end{equation}
($\int$ denotes here the Lebesgue-Stieltjes integral). From condition
(2) of the Skorohod problem we get that $\mathrm{d}\xi_{u}$ and $\mathrm{d}\xi_{l}$
are mutually singular measures and equality in (\ref{eq:variation})
holds for $\Phi=2\phi/c,$ indeed 
\begin{align*}
\frac{c}{2}\cdot TV\left(\xi,\left[0;\cdot\right]\right) & =\int_{0}^{\cdot}\left(u-\xi\right)\mathrm{d}\xi_{u}-\int_{0}^{\cdot}\left(u-\xi\right)\mathrm{d}\xi_{d}\\
 & =\int_{0}^{\cdot}\left(u\left(s\right)-\xi\left(s\right)\right)\mathrm{d}\xi\left(s\right)=\int_{0}^{\cdot}\phi\left(s\right)\mathrm{d}\xi\left(s\right).
\end{align*}
On the other hand, 
\begin{align*}
\frac{c}{2}\xi & =\int_{0}^{\cdot}\left(u-\xi\right)\mathrm{d}\xi_{u}+\int_{0}^{\cdot}\left(u-\xi\right)\mathrm{d}\xi_{d}\\
 & =\int_{0}^{\cdot}\left(u\left(s\right)-\xi\left(s\right)\right)\mathrm{d}TV\left(\xi,\left[0;s\right]\right)=\int_{0}^{\cdot}\phi\left(s\right)\mathrm{d}TV\left(\xi,\left[0;s\right]\right).
\end{align*}

Let us mention that for a c\`{a}dl\`{a}g input and c\`{a}dl\`{a}g  
functions $\alpha,$ $\beta$ it is possible to define 
the \emph{Skorohod map on }$\left[\alpha;\beta\right].$
The definition of\emph{ }the Skorohod map\emph{ }on $\left[\alpha;\beta\right]$ (see e.g. \cite[Definition 2.1]{BurdzySPA})
is similar to the definition of the Skorohod map on $\left[-c/2;c/2\right],$
with $-c/2$ replaced by $\alpha$ and $c/2$ replaced by $\beta.$ In \cite[Definition 2.1]{BurdzySPA}
the starting condition $\xi^{0}$ is not specified, but it may be
imposed with the shift of the input - in \cite[Definition 2.1]{BurdzySPA} one substitutes  $\psi$ with 
$\psi-\psi\left(0\right)+\xi^{0}.$

\section{The play operator with time-dependent boundaries}

Now we will extend the definition of the play operator to the case
when the interval $\left[-c/2;c/2\right]$ is replaced by time-dependent,
characteristics (boundaries) $\alpha,\beta\in G\left[0;+\infty\right)$
such that for all $t\geq0,$ $\alpha\left(t\right)\leq\beta\left(t\right)$ ($\alpha \leq \beta$ in short).
In this section we will focus on the case when the output has locally
finite variation. We start with necessary definitions and notation.

Assume that $\alpha,$ $\beta\in G\left[0;+\infty\right),$ $\alpha \leq \beta,$ are given and fixed.
For $\psi\in G\left[0;+\infty\right)$ let us denote its symmetrization
with respect to $\alpha$ and $\beta$ by $\tilde{\psi},$ i.e. $\tilde{\psi}=\psi-\tfrac{1}{2}\left(\alpha+\beta\right).$
Moreover, let us define $\gamma:=\beta-\alpha.$

Now, for $0\leq a<b<+\infty$ we consider $\alpha,\beta-$truncated
variation of $\psi$ on $\left[a;b\right]$ defined as 

\begin{eqnarray*}
TV^{\alpha,\beta}\left(\psi,\left[a;b\right]\right)
 :=\sup_{n}\sup_{a\leq t_{0}<t_{1}<\cdots<t_{n}\leq b}\sum_{i=1}^{n}\left(\left|\tilde{\psi}\left(t_{i}\right)-\tilde{\psi}\left(t_{i-1}\right)\right|-\frac{1}{2}\left(\gamma\left(t_{i}\right)+\gamma\left(t_{i-1}\right)\right)\right)_{+}.
\end{eqnarray*}

Moreover, we define $\alpha,\beta-$ upward and downward truncated
variations: 
\begin{eqnarray*}
  UTV^{\alpha,\beta}\left(\psi,\left[a;b\right]\right)
  :=\sup_{n}\sup_{a\leq t_{0}<t_{1}<\cdots<t_{n}\leq b}\sum_{i=1}^{n}\left(\tilde{\psi}\left(t_{i}\right)-\tilde{\psi}\left(t_{i-1}\right)-\frac{1}{2}\left(\gamma\left(t_{i}\right)+\gamma\left(t_{i-1}\right)\right)\right)_{+},
\end{eqnarray*}
\begin{eqnarray*}
  DTV^{\alpha,\beta}\left(\psi,\left[a;b\right]\right)
  :=\sup_{n}\sup_{a\leq t_{0}<t_{1}<\cdots<t_{n}\leq b}\sum_{i=1}^{n}\left(\tilde{\psi}\left(t_{i-1}\right)-\tilde{\psi}\left(t_{i}\right)-\frac{1}{2}\left(\gamma\left(t_{i}\right)+\gamma\left(t_{i-1}\right)\right)\right)_{+}.
\end{eqnarray*}

For $c>0,$ $\alpha\equiv-c/2,$ $\beta\equiv c/2$ we naturally have
$TV^{\alpha,\beta}\left(\psi,\left[a;b\right]\right)=TV^{c}\left(\psi,\left[a;b\right]\right)$
and the same holds for $UTV^{\alpha,\beta}$ and $DTV^{\alpha,\beta}.$ 

We have the following simple but important result:
\begin{prop}\label{lowerBound} Let $\psi:[0;+\infty)\rightarrow\mathbb{R},$
$\xi:[0;+\infty)\rightarrow\mathbb{R}$ and assume that for all $t\geq0,$
the inequalities $\alpha\left(t\right)\leq \psi\left(t\right)-\xi\left(t\right)\leq\beta\left(t\right)$
($\alpha\leq \psi-\xi\leq\beta$ in short) hold. Then for $0\leq a<b<+\infty$
\[
TV\left(\xi,\left[a;b\right]\right)\geq TV^{\alpha,\beta}\left(\psi,\left[a;b\right]\right),
\]
thus $TV^{\alpha,\beta}\left(\psi,\left[a;b\right]\right)$ are the
lower bounds for total variation on the intervals $\left[a;b\right],$
$0\leq a<b<+\infty,$ for all functions $\xi:[0;+\infty)\rightarrow\mathbb{R}$
such that $\alpha\leq\psi-\xi\leq\beta.$ Similarly 
\[
UTV\left(\xi,\left[a;b\right]\right)\geq UTV^{\alpha,\beta}\left(\psi,\left[a;b\right]\right),\mbox{ }DTV\left(\xi,\left[a;b\right]\right)\geq DTV^{\alpha,\beta}\left(\psi,\left[a;b\right]\right).
\]
\end{prop}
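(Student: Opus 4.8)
The plan is to reduce the statement to a single two-point estimate and then sum over an arbitrary partition. Fix a partition $a \le t_0 < t_1 < \cdots < t_n \le b$. Both $TV(\xi,[a;b])$ and $TV^{\alpha,\beta}(\psi,[a;b])$ are suprema over the \emph{same} collection of partitions, so it suffices to show that for every consecutive pair $s = t_{i-1}$, $t = t_i$ one has the term-by-term inequality
\[
\left(\left|\tilde{\psi}(t) - \tilde{\psi}(s)\right| - \tfrac{1}{2}\left(\gamma(t) + \gamma(s)\right)\right)_+ \le \left|\xi(t) - \xi(s)\right|.
\]
Indeed, summing this over $i = 1, \ldots, n$ bounds the partition sum defining $TV^{\alpha,\beta}(\psi,[a;b])$ by the corresponding partition sum for $\xi$, which in turn is at most $TV(\xi,[a;b])$; taking the supremum over all partitions on the left then yields the desired inequality.

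The key is to introduce the symmetrized difference. Set $\phi := \psi - \xi$, so that the hypothesis reads $\alpha \le \phi \le \beta$, and let $\tilde{\phi} := \phi - \tfrac{1}{2}(\alpha + \beta)$ be its symmetrization with respect to $\alpha$ and $\beta$. Subtracting $\tfrac{1}{2}(\alpha+\beta)$ from all three parts of $\alpha \le \phi \le \beta$ gives $-\tfrac{1}{2}\gamma \le \tilde{\phi} \le \tfrac{1}{2}\gamma$, that is, $\left|\tilde{\phi}(t)\right| \le \tfrac{1}{2}\gamma(t)$ for every $t \ge 0$. The decisive algebraic observation is that the common shift $\tfrac{1}{2}(\alpha+\beta)$ cancels in the difference of the two symmetrizations, so that $\tilde{\psi} - \tilde{\phi} = \psi - \phi = \xi$; equivalently, for any $s < t$,
\[
\tilde{\psi}(t) - \tilde{\psi}(s) = \bigl(\xi(t) - \xi(s)\bigr) + \bigl(\tilde{\phi}(t) - \tilde{\phi}(s)\bigr).
\]

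From here the two-point estimate is immediate. By the triangle inequality and the bound $\left|\tilde{\phi}\right| \le \tfrac{1}{2}\gamma$,
\[
\left|\tilde{\psi}(t) - \tilde{\psi}(s)\right| \le \left|\xi(t) - \xi(s)\right| + \left|\tilde{\phi}(t)\right| + \left|\tilde{\phi}(s)\right| \le \left|\xi(t) - \xi(s)\right| + \tfrac{1}{2}\bigl(\gamma(t) + \gamma(s)\bigr),
\]
which rearranges to $\left|\tilde{\psi}(t) - \tilde{\psi}(s)\right| - \tfrac{1}{2}(\gamma(t)+\gamma(s)) \le \left|\xi(t) - \xi(s)\right|$; since the right-hand side is nonnegative, I may insert the positive part on the left, obtaining exactly the claimed term-by-term inequality. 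The statements for $UTV^{\alpha,\beta}$ and $DTV^{\alpha,\beta}$ follow by the same computation, replacing the triangle inequality with the signed one-sided bounds $\tilde{\phi}(t) - \tilde{\phi}(s) \le \tfrac{1}{2}(\gamma(t)+\gamma(s))$ and $\tilde{\phi}(s) - \tilde{\phi}(t) \le \tfrac{1}{2}(\gamma(t)+\gamma(s))$, which yield $(\xi(t)-\xi(s))_+$ and $(\xi(s)-\xi(t))_+$ as the respective right-hand sides.

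As for the main obstacle: there is essentially no analytic difficulty, since no regularity of $\psi$ or $\xi$ and no finiteness of $TV(\xi,[a;b])$ is used, and the estimate holds for arbitrary real-valued functions. The only point requiring care is the bookkeeping of the two symmetrizations and the cancellation $\tilde{\psi} - \tilde{\phi} = \xi$; once that identity is established the argument is a purely term-by-term rearrangement, and the passage from it to the variation functionals is routine precisely because the defining suprema on both sides range over the same set of partitions.
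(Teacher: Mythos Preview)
Your proof is correct and follows essentially the same approach as the paper. The paper obtains the two-point estimate directly from the bounds $\xi(t)\ge(\psi-\beta)(t)$ and $\xi(s)\le(\psi-\alpha)(s)$ (and the symmetric pair), which after rewriting in terms of $\tilde\psi$ and $\gamma$ gives exactly your inequality; your introduction of $\tilde\phi$ and the identity $\tilde\psi-\tilde\phi=\xi$ is just a slightly different packaging of the same computation.
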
 \begin{proof} The first assertion follows easily from the
following inequalities. If $\alpha\leq\psi-\xi\leq\beta$ and $0\leq s<t<+\infty$
then 
\[
\xi\left(t\right)-\xi\left(s\right)\geq\left(\psi-\beta\right)\left(t\right)-\left(\psi-\alpha\right)\left(s\right)=\tilde{\psi}\left(t\right)-\tilde{\psi}\left(s\right)-\frac{1}{2}\left(\gamma\left(s\right)+\gamma\left(t\right)\right).
\]
Similarly, 
\[
\xi\left(s\right)-\xi\left(t\right)\geq\left(\psi-\beta\right)\left(s\right)-\left(\psi-\alpha\right)\left(t\right)=\tilde{\psi}\left(s\right)-\tilde{\psi}\left(t\right)-\frac{1}{2}\left(\gamma\left(s\right)+\gamma\left(t\right)\right).
\]
Hence 
\[
\left|\xi\left(t\right)-\xi\left(s\right)\right|\geq\left(\left|\tilde{\psi}\left(s\right)-\tilde{\psi}\left(t\right)\right|-\frac{1}{2}\left(\gamma\left(s\right)+\gamma\left(t\right)\right)\right)_{+}.
\]
The second assertion follows directly from the inequalities 
\[
\left(\xi\left(t\right)-\xi\left(s\right)\right)_{+}\geq\left(\tilde{\psi}\left(t\right)-\tilde{\psi}\left(s\right)-\frac{1}{2}\left(\gamma\left(s\right)+\gamma\left(t\right)\right)\right)_{+},
\]
\[
\left(\xi\left(s\right)-\xi\left(t\right)\right)_{+}\geq\left(\tilde{\psi}\left(s\right)-\tilde{\psi}\left(t\right)-\frac{1}{2}\left(\gamma\left(s\right)+\gamma\left(t\right)\right)\right)_{+}.
\]
\end{proof} 
From Proposition \ref{lowerBound} we get immediately
the necessary condition for the output $\xi$ of any (not only play)
operator with input $u$ and such that $\alpha\leq u-\xi\leq\beta,$
to have locally bounded variation: 
\begin{equation}
\mbox{for any }t\geq0,\mbox{ }TV^{\alpha,\beta}\left(u,\left[0;t\right]\right)<+\infty.\label{eq:cond_finiteness}
\end{equation}

Now, from the laziness principle we derive the definition of the play
operator with regulated inputs and time-dependent, regulated boundaries.\begin{defn}
\label{play_def-1} Let $\alpha,\beta,u\in G\left[0;+\infty\right),$ $\alpha \leq \beta,$
$\xi^{0}\in$$\left[u\left(0\right)-\beta\left(0\right);u\left(0\right)-\alpha\left(0\right)\right]$
and assume that the condition (\ref{eq:cond_finiteness}) holds. There
exists an unique regulated function $\xi\in BV[0;+\infty)$ such that
$\xi\left(0\right)=\xi^{0}$ and for any $t>0$ and $\chi:[0;+\infty)\rightarrow\mathbb{R}$
such that $\chi\left(0\right)=\xi^{0}$ and $\alpha\leq u-\chi\leq\beta$
we have 
\[
TV\left(\xi,\left[0;t\right]\right)\leq TV\left(\chi,\left[0;t\right]\right).
\]
The map 
\[
G[0;+\infty)\ni u\mapsto\xi\in BV[0;+\infty)
\]
will be called the \emph{play operator with boundaries (or characteristics)
$\alpha,$ $\beta$} and it will be denoted $\xi=\mathfrak{p}_{\alpha,\beta}\left[\xi^{0},u\right].$
\end{defn}

 The correctness of this definition is guaranteed by Corollary
\ref{Play_UTVDTV-1} which is an easy consequence of the following main result of this paper. 
\begin{thm} \label{cond_tv_alpha_beta} Let $t_{0} \in [0; +\infty),$
$\psi,\alpha,\beta\in G[t_{0};+\infty)$ and assume that for any $t\geq t_{0},$
$\alpha\left(t\right)\leq\beta\left(t\right)$ and 
\begin{equation}
TV^{\alpha,\beta}\left(\psi,\left[t_{0};t\right]\right)<+\infty.\label{eq:finiteness}
\end{equation}
There exists a function $\psi^{\alpha,\beta}\in BV[t_{0};+\infty)$
such that for all $t\geq t_{0},$ $\alpha\left(t\right)\leq\psi\left(t\right)-\psi^{\alpha,\beta}\left(t\right)\leq\beta\left(t\right)$
and 
\begin{equation} \label{eq:Jordan1-0}
TV\left(\psi^{\alpha,\beta},\left[t_{0};t\right]\right)=TV^{\alpha,\beta}\left(\psi,\left[a;t\right]\right)<+\infty,
\end{equation}
\begin{equation}
UTV\left(\psi^{\alpha,\beta},\left[t_{0};t\right]\right)=UTV^{\alpha,\beta}\left(\psi,\left[t_{0};t\right]\right),\mbox{ }DTV\left(\psi^{\alpha,\beta},\left[t_{0};t\right]\right)=DTV^{\alpha,\beta}\left(\psi,\left[t_{0};t\right]\right),\label{eq:Jordan1-1}
\end{equation}
and for $t\geq t_{0},$ $\psi^{\alpha,\beta}\left(t\right)$ may be
expressed as 
\begin{equation}
\psi^{\alpha,\beta}\left(t\right)=\psi^{\alpha,\beta}\left(t_{0}\right)+UTV^{\alpha,\beta}\left(\psi,\left[t_{0};t\right]\right)-DTV^{\alpha,\beta}\left(\psi,\left[t_{0};t\right]\right).\label{eq:Jordan-1}
\end{equation}
Moreover, for any $\xi\in G[t_{0};+\infty)$ such that 
\begin{enumerate}
\item $\xi\left(t_{0}\right)=\psi^{\alpha,\beta}\left(t_{0}\right),$ 
\item for any $t\ge t_{0},$ $\alpha\left(t\right)\leq\psi\left(t\right)-\xi\left(t\right)\leq\beta\left(t\right)$
and $TV\left(\xi,\left[t_{0};t\right]\right)\leq TV^{\alpha,\beta}\left(\psi,\left[t_{0};t\right]\right),$ 
\end{enumerate}
we have $\xi\left(t\right)=\psi^{\alpha,\beta}\left(t\right)$ for
$t\geq t_{0}.$ \end{thm}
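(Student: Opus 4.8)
The plan is to separate the two halves of each claimed identity. The inequalities ``$\ge$'' implicit in (\ref{eq:Jordan1-0})--(\ref{eq:Jordan1-1}) come for free: once we exhibit a $\psi^{\alpha,\beta}$ satisfying the band condition $\alpha\le\psi-\psi^{\alpha,\beta}\le\beta$, Proposition \ref{lowerBound} immediately gives $TV(\psi^{\alpha,\beta},[t_0;t])\ge TV^{\alpha,\beta}(\psi,[t_0;t])$ and the analogous bounds for $UTV$ and $DTV$. So the whole problem is to \emph{construct} an extremal $\psi^{\alpha,\beta}$ realising equality. Writing $\tilde\psi=\psi-\tfrac12(\alpha+\beta)$ and $g=\tfrac12\gamma=\tfrac12(\beta-\alpha)\ge 0$, the band condition is equivalent to $|\psi^{\alpha,\beta}(t)-\tilde\psi(t)|\le g(t)$, i.e.\ $\psi^{\alpha,\beta}$ must live in the tube $[\tilde\psi-g,\tilde\psi+g]$ and be the minimal--variation path through it. I would set $U(t):=UTV^{\alpha,\beta}(\psi,[t_0;t])$ and $D(t):=DTV^{\alpha,\beta}(\psi,[t_0;t])$, observe that both are nondecreasing and, by (\ref{eq:finiteness}) together with the termwise bounds $UTV^{\alpha,\beta},DTV^{\alpha,\beta}\le TV^{\alpha,\beta}<+\infty$, finite and bounded on compacts, and then define $\psi^{\alpha,\beta}(t):=v+U(t)-D(t)$ for a suitable constant $v$. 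This is manifestly of locally bounded variation and already matches the asserted representation (\ref{eq:Jordan-1}).

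The heart of the proof, and the main obstacle, is to show that $v$ can be chosen so that $\psi^{\alpha,\beta}$ stays in the tube for \emph{all} $t$. Since $U,D$ do not depend on $v$, this vertical shift is the only freedom, and admissibility amounts to $\sup_s(\tilde\psi(s)-g(s)-U(s)+D(s))\le v\le\inf_t(\tilde\psi(t)+g(t)-U(t)+D(t))$; such a $v$ exists precisely when the \textbf{cross inequality}
\[
\tilde\psi(s)-\tilde\psi(t)-g(s)-g(t)\le\bigl(U(s)-U(t)\bigr)-\bigl(D(s)-D(t)\bigr)
\]
holds for all $s,t\ge t_0$. Mere superadditivity of $U$ and $D$ yields only one--sided (lower) estimates on their increments and is visibly \emph{not} enough here; the coupling between $U$ and $D$ is exactly what must be controlled. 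I would prove the cross inequality by fixing $s,t$, taking partitions nearly optimal for $U$ and for $D$, inserting the points $s,t$ into the competing partitions and performing an exchange/surgery argument, comparing the truncated increments by means of the elementary per--increment inequalities already used in the proof of Proposition \ref{lowerBound}. The genuinely technical point is that for \emph{regulated} (not c\`{a}dl\`{a}g) $\psi$ the relevant suprema may be approached through left limits, right limits, or attained values, so the argument must split according to how the tube is touched --- precisely the ``many special cases'' alluded to after Theorem \ref{lowerBound-1}.

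Granting the band, I would obtain the equalities by bounding the variation of $\psi^{\alpha,\beta}$ from above by the corresponding truncated variation, i.e.\ reversing Proposition \ref{lowerBound}. Here I would use the taut--string structure of $\psi^{\alpha,\beta}$: on a maximal stretch where it increases it hugs the lower edge $\tilde\psi-g$, and on a stretch where it decreases it hugs the upper edge $\tilde\psi+g$, so across a turning partition $t_0=\tau_0<\tau_1<\cdots$ each increment of $\psi^{\alpha,\beta}$ equals exactly a term $\pm\bigl(\tilde\psi(\tau_k)-\tilde\psi(\tau_{k-1})-\tfrac12(\gamma(\tau_k)+\gamma(\tau_{k-1}))\bigr)$ of the defining sums. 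Summing the positive increments gives $UTV(\psi^{\alpha,\beta},[t_0;t])\le UTV^{\alpha,\beta}(\psi,[t_0;t])=U(t)$, and together with the matching lower bound from Proposition \ref{lowerBound} this forces $UTV(\psi^{\alpha,\beta},[t_0;t])=U(t)$; symmetrically $DTV(\psi^{\alpha,\beta},[t_0;t])=D(t)$. The ordinary Jordan decomposition of the $BV$ function $\psi^{\alpha,\beta}$ then yields $TV(\psi^{\alpha,\beta})=U+D=UTV^{\alpha,\beta}+DTV^{\alpha,\beta}$ and the representation (\ref{eq:Jordan-1}); in particular it establishes $TV^{\alpha,\beta}=UTV^{\alpha,\beta}+DTV^{\alpha,\beta}$ and the identity (\ref{eq:Jordan1-0}). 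As with the band, the turning--point description has to be made rigorous with case analysis at jumps and at accumulation points of turning times.

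For the uniqueness assertion, let $\xi\in G[t_0;+\infty)$ satisfy the two listed conditions. Being in the tube, $\xi$ is covered by Proposition \ref{lowerBound}, so $UTV(\xi,[t_0;t])\ge UTV^{\alpha,\beta}(\psi,[t_0;t])$ and $DTV(\xi,[t_0;t])\ge DTV^{\alpha,\beta}(\psi,[t_0;t])$; condition (2) gives $TV(\xi,[t_0;t])\le TV^{\alpha,\beta}(\psi,[t_0;t])$, so $\xi$ has locally bounded variation and, using the already proved $TV^{\alpha,\beta}=UTV^{\alpha,\beta}+DTV^{\alpha,\beta}$ together with the ordinary identity $TV(\xi)=UTV(\xi)+DTV(\xi)$, both displayed inequalities must be equalities for every $t$. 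Hence $UTV(\xi,[t_0;t])=UTV^{\alpha,\beta}(\psi,[t_0;t])$ and $DTV(\xi,[t_0;t])=DTV^{\alpha,\beta}(\psi,[t_0;t])$, and the Jordan representation of $\xi$ together with $\xi(t_0)=\psi^{\alpha,\beta}(t_0)$ and (\ref{eq:Jordan-1}) gives $\xi(t)=\psi^{\alpha,\beta}(t)$ for all $t\ge t_0$. I expect the band lemma of the second paragraph to be by far the most delicate ingredient; the variation identities and the uniqueness argument are then comparatively routine bookkeeping built on Proposition \ref{lowerBound} and the Jordan decomposition.
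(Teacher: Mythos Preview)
Your approach differs substantially from the paper's. The paper first treats step inputs and step characteristics by an explicit piecewise-monotone construction: it defines switching indices $I_{U,k},I_{D,k}$ and sets $\psi^{\alpha,\beta}$ equal to running maxima of $\psi-\beta$ on ``up'' stretches and running minima of $\psi-\alpha$ on ``down'' stretches. It then verifies \emph{directly} from this construction that $TV(\psi^{\alpha,\beta},[t_0;t])\le TV^{\alpha,\beta}(\psi,[t_0;t])$, by exhibiting the partition of switching times and using super\-additivity of $TV^{\alpha,\beta}$ over adjacent intervals. The general regulated case is then obtained by approximating $\psi,\alpha,\beta$ in sup norm by step functions with widened characteristics and passing to the limit in $UTV^{\alpha,\beta},DTV^{\alpha,\beta}$. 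You instead define $\psi^{\alpha,\beta}=v+U-D$ abstractly and try to verify everything for this candidate directly; if it worked it would avoid the step-function reduction entirely.

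Your uniqueness paragraph is correct and matches the paper's argument. The reduction of the band condition to the cross inequality is also correct, and the proposed partition-surgery proof is plausible.

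The gap is in your third paragraph. Once the band holds, you do not actually need the taut-string picture for (\ref{eq:Jordan1-1}): since $U,D$ are nondecreasing, $UTV(v+U-D)\le U$ and $DTV(v+U-D)\le D$ are automatic, and Proposition~\ref{lowerBound} gives the reverse inequalities. So (\ref{eq:Jordan1-1}) and (\ref{eq:Jordan-1}) follow, and $TV(v+U-D)=U+D$. But for (\ref{eq:Jordan1-0}) you still need $U+D\le TV^{\alpha,\beta}$; Proposition~\ref{lowerBound} only gives $U+D=TV(v+U-D)\ge TV^{\alpha,\beta}$, the wrong direction. Your taut-string claim --- that $v+U-D$ hugs the lower edge exactly where it increases --- would indeed deliver the missing inequality, but you have not justified it for the \emph{abstractly defined} function $v+U-D$: why should $U$ increase only where $v+U-D=\tilde\psi-g$? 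Proving that amounts to re-deriving the paper's explicit piecewise-monotone structure, so the argument is circular as written. To close the gap you must either prove the purely combinatorial inequality $UTV^{\alpha,\beta}+DTV^{\alpha,\beta}\le TV^{\alpha,\beta}$ by an interleaving-of-partitions argument independent of any candidate, or fall back on an explicit construction as the paper does.
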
 

\begin{proof} The proof consists of two steps. 

\textbf{Step 1. Proof for step functions.} First we will assume that
the input $\psi$ and the boundaries $\alpha,$ $\beta$ are step
functions, i.e. $\psi$ has representation 
\begin{equation}
\psi\left(t\right)=\sum_{k=0}^{\infty}\psi_{2k}1_{\left\{ t_{k}\right\} }\left(t\right)+\sum_{k=0}^{\infty}\psi_{2k+1}1_{\left(t_{k};t_{k+1}\right)}\left(t\right),\label{eq:psi_representation}
\end{equation}
where $t_{0}<t_{1}<\ldots$ and $\lim_{k\rightarrow+\infty}t_{k}=+\infty,$
and similar representations (with the same $t_{i},$ $i=0,1,\ldots$)
hold for $\alpha$ and $\beta.$ We will simply construct the appropriate
function $\psi^{\alpha,\beta}.$ First, for $i=0,1,\ldots$ we define
\[
I_{U}\left(i\right)=\min\left\{ j\geq i:\min_{i\leq k\leq j}\left(\psi_{k}-\alpha_{k}\right)<\left(\psi_{j}-\beta_{j}\right)\right\} ,
\]
\[
I_{D}\left(i\right)=\min\left\{ j\geq i:\max_{i\leq k\leq j}\left(\psi_{k}-\beta_{k}\right)>\left(\psi_{j}-\alpha_{j}\right)\right\} .
\]
Without loss of generality we may assume that $I_{U}\left(0\right)\leq I_{D}\left(0\right),$ since the case $I_{D}\left(0\right)\leq I_{U}\left(0\right)$ is symmetric. 
Since for every $j\geq0,$ $\psi_{j}-\alpha_{j}\geq\psi_{j}-\beta_{j},$
the value $\psi_{I_{U}\left(i\right)}-\alpha_{I_{U}\left(i\right)}$
can not be a new minimum to date of the sequence $\left(\psi_{j}-\alpha_{j}\right)_{j\geq i}$
but rather the value $\psi_{I_{U}\left(i\right)}-\beta_{I_{U}\left(i\right)}$
is a new maximum to date of the sequence $\left(\psi_{j}-\beta_{j}\right)_{j\geq i}.$

Assuming that $I_{U}\left(0\right)\leq I_{D}\left(0\right)$ we define
sequences $\left(I_{D,k}\right)_{k=-1}^{\infty},\mbox{ }\left(I_{U,k}\right)_{k=0}^{\infty}$
in the following way: $I_{D,-1}=0,$ $I_{U,0}=I_{U}\left(0\right)$
and for $k=0,1,2,\cdots$ 
\[
I_{D,k}=\left\{ \begin{array}{lr}
I_{D}\left(I_{U,k}\right) & \text{ if }I_{U,k}<+\infty,\\
+\infty & \mbox{otherwise},
\end{array}\right.I_{U,k+1}=\left\{ \begin{array}{lr}
I_{U}\left(I_{D,k}\right) & \text{ if }I_{D,k}<+\infty,\\
+\infty & \mbox{otherwise}.
\end{array}\right.
\]
We also define $m_{-1}=\min_{0\leq j<I_{U,0}}\left(\psi_{j}-\alpha_{j}\right).$
Further, for $k=0,1,\cdots,$ and $i\geq I_{D,k},$ let us denote $m_{k}\left(i\right)=\min_{I_{D,k}\leq j\leq i}\left(\psi_{j}-\alpha_{j}\right)$
and for $k=0,1,\cdots,$ $i\geq I_{U,k},$ $M_{k}\left(i\right)=\max_{I_{U,k}\leq j\leq i}\left(\psi_{j}-\beta_{j}\right).$ 

Now, assuming that $I_{U}\left(0\right)\leq I_{D}\left(0\right),$
we define auxiliary function $p^{\alpha,\beta}$ defined on the set
of non-negative integers $i=0,1,2,\ldots$ as follows. 
\[
p_{i}^{\alpha,\beta}=\left\{ \begin{array}{lr}
m_{-1} & \text{ if }0\leq i<I_{U,0};\\
M_{k}\left(i\right) & \text{ if }I_{U,k}\leq i<I_{D,k},\mbox{ }k=0,1,\ldots;\\
m_{k}\left(i\right) & \text{ if }I_{D,k}\leq i<I_{U,k+1},\mbox{ }k=0,1,\ldots.
\end{array}\right.
\]
\begin{rmk} The value of $m_{-1}$ is chosen in such a way that the
difference $\psi-m_{-1}$ stays within the values  $\alpha(t)$ and $\beta(t),$
$t\geq t_{0},$ for the longest time possible. \end{rmk}

Now we will prove that for $i=0,1,\ldots,$ $\alpha_{i}\leq\psi_{i}-p_{i}^{\alpha,\beta}\leq\beta_{i}.$
It is enough to consider few cases. 
\begin{itemize}
\item $0\leq i<I_{U,0}=I_{U}\left(0\right).$ In this case, since $0\leq i<I_{U,0},$
$m_{-1}\leq\psi_{i}-\alpha_{i}$ and 
\[
\psi_{i}-p_{i}^{\alpha,\beta}=\psi_{i}-m_{-1}\geq\psi_{i}-\left(\psi_{i}-\alpha_{i}\right)=\alpha_{i}.
\]
On the other hand, since $i<I_{U,0}\leq I_{D}\left(0\right),$ $m_{-1}\geq\psi_{i}-\beta_{i}$
and 
\[
\psi_{i}-p_{i}^{\alpha,\beta}=\psi_{i}-m_{-1}\leq\psi_{i}-\left(\psi_{i}-\beta_{i}\right)=\beta_{i}.
\]

\item $I_{U,k}\leq i<I_{D,k},\mbox{ }k=0,1,\ldots,$ for some $k=0,1,2,\cdots$
In this case, by the definition of $I_{D,k}$, $\max_{I_{U,k}\leq j\leq i}\left(\psi_{j}-\beta_{j}\right)$
belongs to the interval $\left[\psi_{i}-\beta_{i};\psi_{i}-\alpha_{i}\right],$
hence 
\[
\psi_{i}-p_{i}^{\alpha,\beta}=\psi_{i}-\max_{I_{U,k}\leq j\leq i}\left(\psi_{j}-\beta_{j}\right)\in\left[\alpha_{i};\beta_{i}\right].
\]

\item $I_{D,k}\leq i<I_{U,k+1},\mbox{ }k=0,1,\ldots$ for some $k=0,1,2,\cdots$
In this case, by the definition of $T_{U,k+1}$, $\min_{I_{D,k}\leq j\leq i}\left(\psi_{j}-\alpha_{j}\right)$
belongs to the interval $\left[\psi_{i}-\beta_{i};\psi_{i}-\alpha_{i}\right],$
hence 
\[
\psi_{i}-p_{i}^{\alpha,\beta}=\psi_{i}-\min_{I_{D,k}\leq j\leq i}\left(\psi_{j}-\alpha_{j}\right)\in\left[\alpha_{i};\beta_{i}\right].
\]

\end{itemize}
Now we define the function $\psi^{\alpha,\beta}$ in the following
way. Let $s\geq t_{0}$ and $i\left(s\right)=0,1,2,\ldots$ be the
unique integer such that $s=t_{i\left(s\right)}$ or $s\in\left(t_{i\left(s\right)};t_{i\left(s\right)+1}\right),$
we set 
\begin{equation}
\psi^{\alpha,\beta}\left(s\right)=\begin{cases}
p_{2i\left(s\right)}^{\alpha,\beta} & \mbox{if }s=t_{i\left(s\right)};\\
p_{2i\left(s\right)+1}^{\alpha,\beta} & \mbox{if }s\in\left(t_{i\left(s\right)};t_{i\left(s\right)+1}\right).
\end{cases}\label{eq:discr_representation}
\end{equation}
By the representation (\ref{eq:psi_representation}) we get $\alpha\left(t\right)\leq\psi\left(t\right)-\psi^{\alpha,\beta}\left(t\right)\leq\beta\left(t\right)$
for $t\geq t_0.$ By this and by Proposition \ref{lowerBound} (the minimality
of $TV^{\alpha,\beta}\left(\psi,\left[t_{0};t\right]\right)$) we
have 
\begin{equation}
TV^{\alpha,\beta}\left(\psi,\left[t_{0};t\right]\right)\leq TV\left(\psi^{\alpha,\beta},\left[t_{0};t\right]\right)<+\infty.\label{eq:lower_bound}
\end{equation}
To prove that $\psi^{\alpha,\beta}$ has the smallest variation possible on the intervals of the form $\left[t_{0};t\right],$ $t\geq t_{0},$
equal $TV^{\alpha,\beta}\left(\psi,\left[t_{0};t\right]\right),$
we will again use the discrete representation (\ref{eq:discr_representation}).
For sequences $p=\left(\psi_{i}\right)_{i\geq0},$ $a=\left(\alpha_{i}\right)_{i\geq0}$
and $b=\left(\beta_{i}\right)_{i\geq0}$ such that $\alpha_{i}\leq\beta_{i},$
$i=0,1,2,\ldots$ we denote $\gamma_{i}=\beta_{i}-\alpha_{i},$ $\tilde{\psi}_{i}=\psi_{i}-\frac{1}{2}\left(\alpha_{i}+\beta_{i}\right)$
and for $k\leq l,$ $k,l=0,1,2,\ldots$ we define the discrete version
of $\alpha,\beta-$truncated variation 
\begin{align*}
 & TV^{a,b}\left(p,\left[k;l\right]\right):=\sup\sum_{j=1}^{n}\left(\left|\tilde{\psi}_{m_{j}}-\tilde{\psi}_{m_{j-1}}\right|-\frac{1}{2}\left(\gamma_{m_{j-1}}+\gamma_{m_{j}}\right)\right)_{+}\\
 & =\sup\sum_{j=1}^{n}\left\{ \left(\max\left(\psi_{m_{j}}-\beta_{m_{j}}\right)-\left(\psi_{m_{j-1}}-\alpha_{m_{j-1}}\right)\right)_{+},\left(\left(\psi_{m_{j-1}}-\beta_{m_{j-1}}\right)-\left(\psi_{m_{j}}-\alpha_{m_{j}}\right)\right)_{+}\right\} ,
\end{align*}
where $"\sup"$ stands for $"\sup_{n}\sup_{k\leq m_{1}<m_{2}<\ldots<m_{n}\leq l}".$
Similarly, we define $UTV^{a,b}\left(p,\left[k;l\right]\right)$ and
$DTV^{a,b}\left(p,\left[k;l\right]\right),$ and set $TV=TV^{0,0},$
$UTV=UTV^{0,0},$ and $DTV=DTV^{0,0}.$ By the representations (\ref{eq:psi_representation}),
(\ref{eq:discr_representation}) we have that 
\begin{equation}
TV^{\alpha,\beta}\left(\psi,\left[t_{0};t\right]\right)=TV^{a,b}\left(p,\left[0;i\left(t\right)\right]\right),\label{eq:discr1}
\end{equation}
\begin{equation}
TV\left(\psi^{\alpha,\beta},\left[t_{0};t\right]\right)=TV\left(p^{\alpha,\beta},\left[0;i\left(t\right)\right]\right)\label{eq:discr2}
\end{equation}
and similar equalities hold for $UTV$ and $DTV.$

We have the discrete counterpart of Proposition \ref{lowerBound},
stating that for any $h=\left(\chi_{i}\right)_{i\geq0}$ such that
$\alpha_{i}\leq\psi_{i}-\chi_{i}\leq\beta_{i},$ $i=1,2,\ldots,$
for $k\leq l,$ $k,l=0,1,2,\ldots$ we have 
\[
TV\left(h,\left[k;l\right]\right)\geq TV^{a,b}\left(p,\left[k;l\right]\right)
\]
and similar bounds hold for $UTV$ and $DTV.$ We will prove that
$p^{\alpha,\beta}$ has the smallest possible variation equal $TV^{a,b}\left(p,\left[0;i\right]\right),$
on the intervals of the form $\left[0;i\right],$ $i=0,1,\ldots$
among such functions. Again, we consider several cases
\begin{itemize}
\item $0\leq i<I_{U,0}.$ In this case we naturally have $TV\left(p^{\alpha,\beta},\left[0;i\right]\right)=0\leq TV^{a,b}\left(p,\left[0;i\right]\right).$
\item $I_{U,0}\leq i<I_{D,0}.$ In this case we have 
\begin{align*}
TV\left(p^{\alpha,\beta},\left[0;i\right]\right) & =p_{i}^{\alpha,\beta}-m_{-1}\\
 & =\max_{I_{U,0}\leq j\leq i}\left(\psi_{j}-\beta_{j}\right)-\min_{0\leq j<I_{U,0}}\left(\psi_{j}-\alpha_{j}\right)\leq TV^{a,b}\left(p,\left[0;i\right]\right).
\end{align*}

\item $i\geq I_{D,0}$. We consider positive and negative variations of $p^{\alpha,\beta}$
on the intervals of the form $\left[I_{U,k}-1;I_{D,k}-1\right],$
$\left[I_{D,k}-1;I_{U,k+1}-1\right],$ $k=0,1,\ldots.$ Since $p^{\alpha,\beta}$
is non-decreasing on $\left[I_{U,k}-1;I_{D,k}-1\right]$ and $I_{D,k}\geq I_{U,k}+1,$
$k=0,1,\ldots,$ it is easy to see that for any $i\in\left[I_{U,k};I_{D,k}-1\right]$
we have 
\begin{align*}
TV\left(p^{\alpha,\beta},\left[I_{U,k}-1;i\right]\right) & =p_{i}^{\alpha,\beta}-p_{I_{U,k}-1}^{\alpha,\beta}\\
=\max_{I_{U,k}\leq j\leq i}\left(\psi_{j}-\beta_{j}\right)- & \min_{I_{D,k-1}\leq j\leq I_{U,k}-1}\left(\psi_{j}-\alpha_{j}\right)\leq TV^{a,b}\left(p,\left[I_{U,k}-1;i\right]\right).
\end{align*}
Similarly, since $p^{\alpha,\beta}$ is non-increasing on $\left[I_{D,k}-1;I_{U,k+1}-1\right],$
$k=0,1,\ldots,$ for any $i\in\left[I_{D,k};I_{U,k+1}-1\right]$ we
have 
\begin{align*}
TV\left(p^{\alpha,\beta},\left[I_{D,k}-1;i\right]\right) & =p_{I_{D,k}-1}^{\alpha,\beta}-p_{i}^{\alpha,\beta}\leq TV^{a,b}\left(p,\left[I_{D,k}-1;i\right]\right),
\end{align*}
Summing up these variations, for $i\in\left[I_{U,k};I_{D,k}-1\right]$
we get 
\begin{align}
TV\left(p^{\alpha,\beta},\left[0;i\right]\right) & \leq\sum_{j=0}^{k-1}TV^{a,b}\left(p,\left[I_{U,j}-1;I_{D,j}-1\right]\right)+\sum_{j=0}^{k-1}TV^{a,b}\left(p,\left[I_{D,j}-1;I_{U,j+1}-1\right]\right)+TV^{a,b}\left(p,\left[I_{U,k}-1;i\right]\right)\nonumber \\
 & \leq TV^{a,b}\left(p,\left[0;i\right]\right).\label{eq:discr_estimate}
\end{align}
The last inequality follows from the subadditivity of $TV^{a,b},$
i.e. $TV^{a,b}\left(p,\left[k;l\right]\right)+TV^{a,b}\left(p,\left[l;m\right]\right)\leq TV^{a,b}\left(p,\left[k;m\right]\right)$
for $0\leq k\leq l\leq m<+\infty,$ $k,l,m=0,1,2,\ldots$. Similarly,
we get the same estimate (\ref{eq:discr_estimate}) for $i\in\left[I_{D,k};I_{U,k+1}-1\right].$
\end{itemize}
Hence, for every $i=0,1,\ldots$ we have proved the estimate $TV\left(p^{\alpha,\beta},\left[0;i\right]\right)\leq TV^{a,b}\left(p,\left[0;i\right]\right).$
From this and (\ref{eq:discr1}), (\ref{eq:discr2}), for any $t\geq t_{0}$
we finally get 
\[
TV\left(\psi^{\alpha,\beta},\left[t_{0};t\right]\right)\leq TV^{\alpha,\beta}\left(\psi,\left[t_{0};t\right]\right).
\]
Now, by Proposition \ref{eq:lower_bound} and by the estimate 
\[
TV^{\alpha,\beta}\left(\psi,\left[t_{0};t\right]\right)\leq UTV^{\alpha,\beta}\left(\psi,\left[t_{0};t\right]\right)+DTV^{\alpha,\beta}\left(\psi,\left[t_{0};t\right]\right),
\]
which is an easy consequence of the definition of $TV^{\alpha,\beta},$
$UTV^{\alpha,\beta}$ and $DTV^{\alpha,\beta},$ and we arrive at
\begin{align*}
 & UTV^{\alpha,\beta}\left(\psi,\left[t_{0};t\right]\right)+DTV^{\alpha,\beta}\left(\psi,\left[t_{0};t\right]\right)
 \leq UTV\left(\psi^{\alpha,\beta},\left[t_{0};t\right]\right)+DTV\left(\psi^{\alpha,\beta},\left[t_{0};t\right]\right)\\
 & =TV\left(\psi^{\alpha,\beta},\left[t_{0};t\right]\right)\leq TV^{\alpha,\beta}\left(\psi,\left[t_{0};t\right]\right)
  \leq UTV^{a,b}\left(p,\left[0;i\right]\right)+DTV^{a,b}\left(p,\left[0;i\right]\right).
\end{align*}
Hence, for any $t\geq t_{0}$ we must have 
\[
UTV\left(\psi^{\alpha,\beta},\left[t_{0};t\right]\right)=UTV^{\alpha,\beta}\left(\psi,\left[t_{0};t\right]\right),\mbox{ }DTV\left(\psi^{\alpha,\beta},\left[t_{0};t\right]\right)=DTV^{\alpha,\beta}\left(\psi,\left[t_{0};t\right]\right)
\]
and we get (\ref{eq:Jordan1-1}) 
\[
TV\left(\psi^{\alpha,\beta},\left[t_{0};t\right]\right) = TV^{\alpha,\beta}\left(\psi,\left[t_{0};t\right]\right)=UTV^{\alpha,\beta}\left(\psi,\left[t_{0};t\right]\right)+DTV^{\alpha,\beta}\left(\psi,\left[t_{0};t\right]\right).
\]
Now, (\ref{eq:Jordan-1}) follows the Jordan decomposition of $\psi^{\alpha,\beta}.$ 

Finally, to prove the last assertion about the function $\xi$, we
reason similarly as for $\psi^{\alpha,\beta},$ and by the minimality
of $UTV^{\alpha,\beta}\left(\psi,\left[t_{0};t\right]\right),$ $DTV^{\alpha,\beta}\left(\psi,\left[t_{0};t\right]\right)$
we obtain the Jordan decomposition for $t\geq a,$ 
\[
\xi\left(t\right)=\xi\left(t_{0}\right)+UTV^{\alpha,\beta}\left(\psi,\left[t_{0};t\right]\right)-DTV^{\alpha,\beta}\left(\psi,\left[t_{0};t\right]\right).
\]
Since the starting values $\xi\left(t_{0}\right)$ and $\psi^{\alpha,\beta}\left(t_{0}\right)$
are equal, we get $\xi\left(t\right)=\psi^{\alpha,\beta}\left(t\right)$
for all $t\geq t_{0}.$

\textbf{Step 2. Proof for arbitrary regulated functions.} Let us fix
$c>0.$ Let $\bar{\alpha}^{c},\bar{\beta}^{c}:[0;+\infty)\rightarrow\mathbb{R}$
be such two step functions that 
\[
\left\Vert \alpha-\bar{\alpha}^{c}\right\Vert _{\left[t_{0};+\infty\right)}\leq c/2\mbox{ and }\left\Vert \beta-\bar{\beta}^{c}\right\Vert _{\left[t_{0};+\infty\right)}\leq c/2
\]
and set $\alpha^{c}=\bar{\alpha}^{c}-3c,$ $\beta^{c}=\bar{\beta}^{c}+3c.$
We naturally have $\alpha^c \leq \beta^c$ and
\[
\left\Vert \left(\alpha+\beta\right)-\left(\alpha^{c}+\beta^{c}\right)\right\Vert _{\left[t_{0};+\infty\right)} = \left\Vert \left(\alpha+\beta\right)-\left(\bar{\alpha}^{c}+\bar{\beta}^{c}\right)\right\Vert _{\left[t_{0};+\infty\right)}\leq c.
\]
Define $\gamma=\alpha-\beta,$ $\gamma^{c}=\beta^{c}-\alpha^{c}.$
Notice that $\beta^{c}\geq\beta+2c,$ similarly $\alpha^{c}\leq\alpha-2c,$
thus $\gamma^{c}\geq\gamma+4c.$ We also find $\psi^{c}\in St[t_{0};+\infty)$
such that $\left\Vert \psi-\psi^{c}\right\Vert _{\left[t_{0};+\infty\right)}\leq c/2.$
Now we consider functions $[t_{0};+\infty)\ni t\mapsto UTV^{\alpha^{c},\beta^{c}}\left(\psi^{c},\left[t_{0};t\right]\right),$
$[t_{0};+\infty)\ni t\mapsto UTV^{\alpha^{c},\beta^{c}}\left(\psi^{c},\left[t_{0};t\right]\right)$
and $[t_{0};+\infty)\ni t\mapsto TV^{\alpha^{c},\beta^{c}}\left(\psi^{c},\left[t_{0};t\right]\right).$
Let $\tilde{\psi}=\psi-\frac{1}{2}\left(\alpha+\beta\right),$ $\tilde{\psi}^{c}=\psi^{c}-\frac{1}{2}\left(\alpha^{c}+\beta^{c}\right).$
Notice now that for any $t_{0}\leq s<t,$ $\left|\tilde{\psi}^{c}\left(t\right)-\tilde{\psi}^{c}\left(s\right)\right|\leq\left|\tilde{\psi}\left(t\right)-\tilde{\psi}\left(s\right)\right|+2c$
and 
\begin{align*}
\left|\tilde{\psi}^{c}\left(t\right)-\tilde{\psi}^{c}\left(s\right)\right|-\frac{1}{2}\left(\gamma^{c}\left(t\right)+\gamma^{c}\left(s\right)\right) & \leq\left|\tilde{\psi}\left(t\right)-\tilde{\psi}\left(s\right)\right|+2c-\frac{1}{2}\left(\gamma\left(t\right)+\gamma\left(s\right)+4c\right)\\
 & \leq\left|\tilde{\psi}\left(t\right)-\tilde{\psi}\left(s\right)\right|-\frac{1}{2}\left(\gamma\left(t\right)+\gamma\left(s\right)\right).
\end{align*}
Hence $TV^{\alpha^{c},\beta^{c}}\left(\psi^{c},\left[t_{0};t\right]\right)\leq TV^{\alpha,\beta}\left(\psi,\left[t_{0};t\right]\right).$
On the other hand, in the limit we get 
\[
\limsup_{c\downarrow0}TV^{\alpha^{c},\beta^{c}}\left(\psi^{c},\left[t_{0};t\right]\right)\geq TV^{\alpha,\beta}\left(\psi,\left[t_{0};t\right]\right),
\]
hence we have pointwise convergence of $[t_{0};+\infty)\ni t\mapsto TV^{\alpha^{c},\beta^{c}}\left(\psi^{c},\left[t_{0};t\right]\right)$
to $[t_{0};+\infty)\ni t\mapsto TV^{\alpha,\beta}\left(\psi,\left[t_{0};t\right]\right)$
as $c\downarrow0$ and similar convergences hold for $UTV$ and $DTV.$
By the results obtained in Step 1 there exist functions 
\[
u^{\alpha^{c},\beta^{c}}\left(\cdot\right)=u^{\alpha^{c},\beta^{c}}\left(0\right)+UTV^{\alpha^{c},\beta^{c}}\left(\psi^{c},\left[t_{0};\cdot\right]\right)-DTV^{\alpha^{c},\beta^{c}}\left(\psi^{c},\left[t_{0};\cdot\right]\right)
\]
such that $\alpha^{c}\leq\psi^{c}-\psi^{\alpha^{c},\beta^{c}}\leq\beta^{c}.$
Define $\psi^{0}\left(c\right):=\psi^{\alpha^{c},\beta^{c}}\left(0\right).$
From the just obtained pointwise convergence we get that taking a sequence $c_{n}\downarrow 0$ such that the sequence $\psi^{0}\left(c_{n}\right)$
converges to some number $\psi^{0}$ and defining 
\[
\psi^{\alpha,\beta}\left(\cdot\right)=\psi^{0}+UTV^{\alpha,\beta}\left(\psi,\left[t_{0};\cdot\right]\right)-DTV^{\alpha,\beta}\left(\psi,\left[t_{0};\cdot\right]\right)
\]
we get a function satisfying all assertions of Theorem \ref{cond_tv_alpha_beta}.
Thus, by the assertions obtained for step functions in Step 1, we
get analogous assertions for any regulated $\psi$ and regulated
characteristics $\alpha\leq\beta$ satisfying condition (\ref{eq:cond_finiteness}). 

\end{proof}

\begin{rmk}
From (\ref{eq:Jordan1-0}), (\ref{eq:Jordan1-1}) and the Jordan decomposition we get that for any $u, \alpha, \beta \in G[0;+\infty),$ with $\alpha \leq \beta$ and any $0\leq a <b,$
$$TV^{\alpha, \beta}\left(\psi,\left[a;b\right]\right) = UTV^{\alpha, \beta}\left(\psi,\left[a;b\right]\right) + DTV^{\alpha, \beta}\left(\psi,\left[a;b\right]\right).$$
\end{rmk}
\begin{rmk} \label{gamma} It is easy
to see that if $\gamma_{0}:=\inf_{t\geq0}\left(\beta\left(t\right)-\alpha\left(t\right)\right)>0$
for regulated boundaries $\alpha,$ $\beta,$ then conditions (\ref{eq:cond_finiteness})
and (\ref{eq:finiteness}) for $u,$ resp. $\psi$ hold. Indeed, recall
the well known fact that every regulated function may be uniformly
approximated with arbitrary accuracy by step functions. Assuming that
$\gamma_{0}>0,$ we approximate the function $\tilde{\psi}$ by a step
function $\tilde{\psi}^{0}$ with accuracy $\gamma_{0}/2.$ Since
$\tilde{\psi}^{0},$ as a step function, has locally finite variation
and since for any $t_{0}\leq s_{1}<s_{2}\leq t,$ 
\begin{align*}
 & \left|\tilde{\psi}\left(s_{2}\right)-\tilde{\psi}\left(s_{1}\right)\right|-\frac{1}{2}\left(\gamma\left(s_{1}\right)+\gamma\left(s_{2}\right)\right)\\
 & \leq\left|\tilde{\psi}^{0}\left(s_{2}\right)-\tilde{\psi}^{0}\left(s_{1}\right)\right|+\gamma_{0}-\frac{1}{2}\left(\gamma\left(s_{1}\right)+\gamma\left(s_{2}\right)\right)\leq\left|\tilde{\psi}^{0}\left(s_{2}\right)-\tilde{\psi}^{0}\left(s_{1}\right)\right|
\end{align*}
we obtain $TV^{\alpha,\beta}\left(\psi,\left[t_{0};t\right]\right)\leq TV\left(\tilde{\psi^{c}},\left[t_{0};t\right]\right)<+\infty.$ 

Moreover, from the proof of Theorem \ref{cond_tv_alpha_beta} one
may see that when $\gamma_{0}>0,$ then the function $\psi^{\alpha,\beta}$
of Theorem \ref{cond_tv_alpha_beta} is piecewise monotone. Indeed,
the function $\psi^{\alpha,\beta}$ is non-decreasing as long as it
does not touch the boundary $\psi-\alpha$ and it is non-increasing
as long as it does not touch the boundary $\psi-\beta.$ There are
finitely many ``switches'' of $\psi^{\alpha,\beta}$ between these
two boundaries, since every such a switch corresponds to the oscillation
of $\psi-\alpha$ or $\psi-\beta$ of magnitude no less than $\gamma_{0},$
which may take place only finitely many times since $\psi-\alpha$ and $\psi-\beta$ are regulated. \end{rmk} 
Now, we will see that
when condition (\ref{eq:cond_finiteness}) is satisfied and the input
and boundaries are regulated, then the play operator may be defined
with upward and downward truncated variations. Moreover, this is the
only case (for regulated input and regulated boundaries), when the
output has locally finite total variation. We have the following \begin{cor}\label{Play_UTVDTV-1}
Let $\alpha,\beta,u\in G[0;+\infty),$ $\alpha \leq \beta,$ $\xi^{0}\in\left[u\left(0\right)-\beta\left(0\right);u\left(0\right)-\alpha\left(0\right)\right]$
and assume that condition (\ref{eq:cond_finiteness}) holds. Define $\alpha^{0}\left(0\right)=0,$ $\beta^{0}\left(0\right)=0,$ $u^{\xi^{0}}\left(0\right)=\xi^{0}$ and for $t>0,$
$\alpha^{0}\left(t\right)=\alpha\left(t\right),$ $\beta^{0}\left(t\right)=\beta\left(t\right),$
$u^{\xi^{0}}\left(t\right)=u\left(t\right).$ 
Then, for any $t\geq0,$ $TV^{\alpha^{0},\beta^{0}}\left(u^{\xi^{0}},\left[0;t\right]\right)<+\infty,$
and the play operator $\mathfrak{p}_{\alpha,\beta}\left[\xi^{0},\cdot\right]$
may be defined as 
\begin{equation}
\mathfrak{p}_{\alpha,\beta}\left[\xi^{0},u\right]\left(t\right)=\xi^{0}+UTV^{\alpha^{0},\beta^{0}}\left(u^{\xi^{0}},\left[0;t\right]\right)-DTV^{\alpha^{0},\beta^{0}}\left(u^{\xi^{0}},\left[0;t\right]\right).\label{eq:Jordan33}
\end{equation}
Moreover 
\begin{equation}
UTV\left(\mathfrak{p}_{\alpha,\beta}\left[\xi^{0},u\right],\left[0;t\right]\right)=UTV^{\alpha^{0},\beta^{0}}\left(u^{\xi^{0}},\left[0;t\right]\right)\label{eq:Jordan44}
\end{equation}
and 
\begin{equation}
DTV\left(\mathfrak{p}_{\alpha,\beta}\left[\xi^{0},u\right],\left[0;t\right]\right)=DTV^{\alpha^{0},\beta^{0}}\left(u^{\xi^{0}},\left[0;t\right]\right).\label{eq:Jordan55}
\end{equation}
\end{cor}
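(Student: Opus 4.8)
The plan is to deduce the corollary as a direct application of Theorem \ref{cond_tv_alpha_beta} with $t_{0}=0$, $\psi=u^{\xi^{0}}$, $\alpha=\alpha^{0}$ and $\beta=\beta^{0}$. The point of altering $u,\alpha,\beta$ only at the single point $t=0$ is that it encodes the starting condition $\xi(0)=\xi^{0}$ into the boundary itself: since $\alpha^{0}(0)=\beta^{0}(0)=0$, the admissibility requirement $\alpha^{0}(0)\leq u^{\xi^{0}}(0)-\psi^{\alpha^{0},\beta^{0}}(0)\leq\beta^{0}(0)$ collapses to $\psi^{\alpha^{0},\beta^{0}}(0)=u^{\xi^{0}}(0)=\xi^{0}$, so formula (\ref{eq:Jordan-1}) of the theorem becomes precisely (\ref{eq:Jordan33}).

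First I would check the finiteness hypothesis (\ref{eq:finiteness}) for these data, i.e. $TV^{\alpha^{0},\beta^{0}}(u^{\xi^{0}},[0;t])<+\infty$, which is also the first assertion of the corollary. Since $\alpha^{0},\beta^{0},u^{\xi^{0}}$ agree with $\alpha,\beta,u$ off the point $0$, any partition of $[0;t]$ whose first node is positive contributes exactly the summands of $TV^{\alpha,\beta}(u,[0;t])$ and is bounded by the latter, using that $TV^{\alpha,\beta}(u,[s;t])\leq TV^{\alpha,\beta}(u,[0;t])$ (every partition of $[s;t]$ is one of $[0;t]$). For a partition $0=t_{0}<t_{1}<\cdots<t_{n}\leq t$ starting at $0$, I would split off the first summand, which is at most $|\xi^{0}|+\sup_{(0;t]}|\tilde{\psi}|<+\infty$ (a regulated function is bounded on a compact interval), and bound the remaining summands, involving only positive nodes, by $TV^{\alpha,\beta}(u,[0;t])$. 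Hence $TV^{\alpha^{0},\beta^{0}}(u^{\xi^{0}},[0;t])\leq|\xi^{0}|+\sup_{(0;t]}|\tilde{\psi}|+TV^{\alpha,\beta}(u,[0;t])<+\infty$.

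With the hypothesis in hand, Theorem \ref{cond_tv_alpha_beta} yields $\psi^{\alpha^{0},\beta^{0}}\in BV[0;+\infty)$ with $\psi^{\alpha^{0},\beta^{0}}(0)=\xi^{0}$ and satisfying (\ref{eq:Jordan1-0}), (\ref{eq:Jordan1-1}), (\ref{eq:Jordan-1}). To identify it with $\mathfrak{p}_{\alpha,\beta}[\xi^{0},u]$, the key observation is that the two constraint sets coincide: a function $\chi$ with $\chi(0)=\xi^{0}$ satisfies the original play constraint $\alpha\leq u-\chi\leq\beta$ on $[0;+\infty)$ if and only if it satisfies the modified one $\alpha^{0}\leq u^{\xi^{0}}-\chi\leq\beta^{0}$, the only nontrivial instance being $t=0$, where both read $0=\xi^{0}-\chi(0)$. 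In particular $\psi^{\alpha^{0},\beta^{0}}$ itself obeys $\alpha\leq u-\psi^{\alpha^{0},\beta^{0}}\leq\beta$ on all of $[0;+\infty)$, the case $t=0$ being exactly the hypothesis $\xi^{0}\in[u(0)-\beta(0);u(0)-\alpha(0)]$. By Proposition \ref{lowerBound} applied with the modified boundaries, every competitor $\chi$ has $TV(\chi,[0;t])\geq TV^{\alpha^{0},\beta^{0}}(u^{\xi^{0}},[0;t])=TV(\psi^{\alpha^{0},\beta^{0}},[0;t])$ by (\ref{eq:Jordan1-0}); thus $\psi^{\alpha^{0},\beta^{0}}$ realizes the minimal total variation demanded in Definition \ref{play_def-1}, while the uniqueness clause of Theorem \ref{cond_tv_alpha_beta} forces any other minimizer to coincide with it. Hence $\mathfrak{p}_{\alpha,\beta}[\xi^{0},u]=\psi^{\alpha^{0},\beta^{0}}$, which gives both well-definedness of the play operator and (\ref{eq:Jordan33}); (\ref{eq:Jordan44}) and (\ref{eq:Jordan55}) then follow at once from the $UTV$/$DTV$ identities (\ref{eq:Jordan1-1}).

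I expect the only genuinely delicate point to be the bookkeeping at $t=0$: one must verify carefully that the boundary collapse $\psi^{\alpha^{0},\beta^{0}}(0)=\xi^{0}$ is compatible with admissibility of $\xi^{0}$ (this is where the hypothesis $\xi^{0}\in[u(0)-\beta(0);u(0)-\alpha(0)]$ enters, guaranteeing $\psi^{\alpha^{0},\beta^{0}}$ is admissible for the \emph{original}, unmodified constraint), and that the two constraint sets truly coincide so that minimality transfers in both directions. Everything else is a faithful transcription of Theorem \ref{cond_tv_alpha_beta} and Proposition \ref{lowerBound}.
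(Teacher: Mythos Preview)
Your proof is correct and follows essentially the same route as the paper: apply Theorem~\ref{cond_tv_alpha_beta} to the modified data $(u^{\xi^{0}},\alpha^{0},\beta^{0})$, observe that the degeneracy $\alpha^{0}(0)=\beta^{0}(0)=0$ forces the starting value $\psi^{\alpha^{0},\beta^{0}}(0)=\xi^{0}$, check that the two constraint sets (original and modified) coincide for functions starting at $\xi^{0}$, and conclude via Proposition~\ref{lowerBound} and the uniqueness clause of the theorem.

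The only noteworthy difference is in the finiteness step. The paper first applies Theorem~\ref{cond_tv_alpha_beta} to the \emph{original} data $(u,\alpha,\beta)$ to obtain a function $u^{\alpha,\beta}$ of finite variation with $\alpha\leq u-u^{\alpha,\beta}\leq\beta$, then modifies it at $0$ to produce an admissible function for the modified boundaries, and invokes Proposition~\ref{lowerBound} to bound $TV^{\alpha^{0},\beta^{0}}(u^{\xi^{0}},[0;t])$. You instead give a direct partition estimate, splitting off the summand at $t_{0}=0$ and bounding the remainder by $TV^{\alpha,\beta}(u,[0;t])$. Your argument is more elementary (it avoids a preliminary call to the theorem), while the paper's has the side benefit of producing the explicit bound $TV^{\alpha^{0},\beta^{0}}(u^{\xi^{0}},[0;t])\leq TV^{\alpha,\beta}(u,[0;t])+\beta(0)-\alpha(0)$, which is later reused in Remark~\ref{bounds}.
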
 \begin{proof} First, we prove that for any $t\geq0,$
\[
TV^{\alpha^{0},\beta^{0}}\left(u^{\xi^{0}},\left[0;t\right]\right)<+\infty.
\]
Indeed, by Theorem \ref{cond_tv_alpha_beta}, there exists such $u^{\alpha,\beta}\in G[0;+\infty)$
that $\alpha\leq u-u^{\alpha,\beta}\leq\beta$ and for $t\geq0$ 
\[
TV\left(u^{\alpha,\beta},\left[0;t\right]\right)=TV^{\alpha,\beta}\left(u,\left[0;t\right]\right)<+\infty.
\]
Notice now, that defining 
\[
u^{\alpha,\beta,\xi^{0}}\left(t\right)=\begin{cases}
\xi^{0} & \mbox{for }t=0;\\
u^{\alpha,\beta}\left(t\right) & \mbox{ for }t>0,
\end{cases}
\]
we get a function $u^{\alpha,\beta,\xi^{0}}\in G[0;+\infty)$ with
locally finite total variation and such that $\alpha^{0}\leq u^{\xi^{0}}-u^{\alpha,\beta,\xi^{0}}\leq\beta^{0}.$
By the minimality of $TV^{\alpha^{0},\beta^{0}}\left(u^{\xi^{0}},\left[0;t\right]\right)$
we get 
\begin{equation}
TV^{\alpha^{0},\beta^{0}}\left(u^{\xi^{0}},\left[0;t\right]\right)\leq TV\left(u^{\alpha,\beta,\xi^{0}},\left[0;t\right]\right)<+\infty.\label{eq:gwiazdka}
\end{equation}
Now we know that condition (\ref{eq:cond_finiteness}) for $\psi=u^{\xi^{0}}$
and boundaries $\alpha^{0},\beta^{0}$ hold. Applying Theorem
\ref{cond_tv_alpha_beta} we obtain such a function $u^{\alpha^{0},\beta^{0}}$
that $\alpha^{0}\leq u^{\xi^{0}}-u^{\alpha^{0},\beta^{0}}\leq\beta^{0}$
and for $t\geq0$ 
\begin{equation}
TV\left(u^{\alpha^{0},\beta^{0}},\left[0;t\right]\right)=TV^{\alpha^{0},\beta^{0}}\left(u^{\xi^{0}},\left[0;t\right]\right).\label{eq:ala}
\end{equation}
We have that $u^{\alpha^{0},\beta^{0}}\left(0\right)=\xi^{0}$ and
if there existed some other function $\xi$ such that $\xi\left(0\right)=\xi^{0}$
and $\alpha\leq u-\xi\leq\beta$ with smaller total variation than $u^{\alpha^{0},\beta^{0}}$ on some
interval $\left[0;t_{0}\right],$ $t_{0}\geq0,$ we would have 
\[
TV^{\alpha^{0},\beta^{0}}\left(u^{\xi^{0}},\left[0;t_{0}\right]\right)\leq TV\left(\xi,\left[0;t_{0}\right]\right)<TV\left(u^{\alpha^{0},\beta^{0}},\left[0;t_{0}\right]\right)
\]
(since $\xi$ also satisfies $\alpha^{0}\leq u^{\xi^{0}}-\xi\leq\beta^{0}$), but this is a contradiction with (\ref{eq:ala}). From this and the second part of Theorem \ref{cond_tv_alpha_beta} (applied to $u^{\xi^{0}},$ $\alpha^0$ and $\beta^0$) it follows that $u^{\alpha^{0},\beta^{0}}$
is the unique function with starting value $\xi^{0},$ satisfying
$\alpha\leq u-u^{\alpha^{0},\beta^{0}}\leq\beta$ and with the smallest
variations on the intervals $\left[0;t\right],$ $t\geq0.$ Thus,
it is the output of the play operator with input $u,$ starting condition
$\xi^{0}$ and and boundaries (characteristics) $\alpha,$ $\beta.$
By (\ref{eq:Jordan1-1}) and (\ref{eq:Jordan-1}) from Theorem \ref{cond_tv_alpha_beta} for $u^{\alpha^{0},\beta^{0}}$
we also have equalities (\ref{eq:Jordan33})-(\ref{eq:Jordan55}).\end{proof}
A special case of Corollary \ref{Play_UTVDTV-1} is \begin{cor}\label{Play_UTVDTV}
Let $c>0,$ $u\in G[0;+\infty)$ and $\xi^{0}\in\left[u\left(0\right)-c/2;u\left(0\right)+c/2\right].$
Define $u^{\xi^{0}}\left(0\right)=\xi^{0},$ $c^{0}\left(0\right)=0$ and for $t>0$ define $u^{\xi^{0}}\left(t\right)=u\left(t\right),$ $c^{0}\left(t\right)=c/2.$
Then for any $t\geq0$ the play operator $\mathfrak{p}_{c/2}\left[\xi^{0},\cdot\right]$
may be defined as 
\begin{equation}
\mathfrak{p}_{c/2}\left[\xi^{0},u\right]\left(t\right)=\xi^{0}+UTV^{-c^{0},c^{0}}\left(u^{\xi^{0}},\left[0;t\right]\right)-DTV^{-c^{0},c^{0}}\left(u^{\xi^{0}},\left[0;t\right]\right),\label{eq:jordan66}
\end{equation}
moreover 
\begin{equation}
UTV\left(\mathfrak{p}_{c/2}\left[\xi^{0},u\right],\left[0;t\right]\right)=UTV^{-c^{0},c^{0}}\left(u^{\xi^{0}},\left[0;t\right]\right)\label{eq:jordan77}
\end{equation}
and 
\begin{equation}
DTV\left(\mathfrak{p}_{c/2}\left[\xi^{0},u\right],\left[0;t\right]\right)=UTV^{-c^{0},c^{0}}\left(u^{\xi^{0}},\left[0;t\right]\right).\label{eq:jordan88}
\end{equation}
\end{cor}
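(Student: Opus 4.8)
The plan is to obtain Corollary~\ref{Play_UTVDTV} as the special case of Corollary~\ref{Play_UTVDTV-1} in which the time-dependent boundaries are the constant, symmetric ones $\alpha\equiv-c/2$ and $\beta\equiv c/2$. Under this choice the auxiliary boundaries $\alpha^{0},\beta^{0}$ built in Corollary~\ref{Play_UTVDTV-1} become exactly $\alpha^{0}=-c^{0}$ and $\beta^{0}=c^{0}$ (both vanishing at $t=0$ and equal to $\mp c/2$ for $t>0$), so the functionals $UTV^{\alpha^{0},\beta^{0}}$ and $DTV^{\alpha^{0},\beta^{0}}$ coincide verbatim with $UTV^{-c^{0},c^{0}}$ and $DTV^{-c^{0},c^{0}}$, and the representation (\ref{eq:Jordan33})--(\ref{eq:Jordan55}) turns directly into (\ref{eq:jordan66})--(\ref{eq:jordan88}).

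First I would check that the hypotheses of Corollary~\ref{Play_UTVDTV-1} hold for these boundaries. Since $\gamma_{0}:=\inf_{t\geq0}(\beta(t)-\alpha(t))=c>0$, Remark~\ref{gamma} guarantees that condition (\ref{eq:cond_finiteness}) is satisfied automatically for every regulated input $u$; in particular $TV^{-c^{0},c^{0}}(u^{\xi^{0}},[0;t])<+\infty$ for all $t\geq0$, which is the finiteness claim at the head of the corollary. The one step with any substance is then to verify that the two notions of play operator genuinely agree, i.e.\ that $\mathfrak{p}_{c/2}[\xi^{0},u]$ of Definition~\ref{play_def} is the same object as $\mathfrak{p}_{\alpha,\beta}[\xi^{0},u]$ of Definition~\ref{play_def-1} with $\alpha\equiv-c/2$, $\beta\equiv c/2$. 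This reduces to a direct matching of the defining constraints: the uniform bound $\left\Vert\chi-u\right\Vert_{[0;t]}\leq c/2$ is precisely $-c/2\leq u-\chi\leq c/2$, the admissible set of starting values $[u(0)-c/2;u(0)+c/2]$ equals $[u(0)-\beta(0);u(0)-\alpha(0)]$, and both definitions single out the output by the same minimality of $TV(\cdot,[0;t])$ over $t>0$. Hence the unique minimiser furnished by Corollary~\ref{Play_UTVDTV-1} is exactly $\mathfrak{p}_{c/2}[\xi^{0},u]$.

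With these identifications in place the conclusion is immediate: applying Corollary~\ref{Play_UTVDTV-1} with $\alpha\equiv-c/2$, $\beta\equiv c/2$ yields (\ref{eq:jordan66}) from (\ref{eq:Jordan33}) and the two Jordan-type identities (\ref{eq:jordan77})--(\ref{eq:jordan88}) from (\ref{eq:Jordan44})--(\ref{eq:Jordan55}). I do not expect a genuine obstacle, since the entire analytic content has already been absorbed into Corollary~\ref{Play_UTVDTV-1} and, through it, into Theorem~\ref{cond_tv_alpha_beta} and Proposition~\ref{lowerBound}. The only points demanding care are the bookkeeping of the value at $t=0$ (where $c^{0}$, $\alpha^{0}$ and $\beta^{0}$ all vanish, forcing the output to equal $\xi^{0}$ there) and noting that the right-hand side of (\ref{eq:jordan88}) should read $DTV^{-c^{0},c^{0}}(u^{\xi^{0}},[0;t])$ rather than the $UTV$ displayed in the formula.
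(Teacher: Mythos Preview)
Your proposal is correct and matches the paper's own approach exactly: the paper simply introduces Corollary~\ref{Play_UTVDTV} with the phrase ``A special case of Corollary~\ref{Play_UTVDTV-1} is'' and gives no further proof, so your specialization $\alpha\equiv-c/2$, $\beta\equiv c/2$ together with the identification of $\alpha^{0},\beta^{0}$ with $-c^{0},c^{0}$ is precisely what is intended. Your observation that the right-hand side of (\ref{eq:jordan88}) should read $DTV^{-c^{0},c^{0}}(u^{\xi^{0}},[0;t])$ rather than $UTV^{-c^{0},c^{0}}(u^{\xi^{0}},[0;t])$ is also correct; this is a typo in the paper.
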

\begin{rmk}
It is easy to see that we have the following formulas 
\begin{eqnarray}
 UTV^{-c^{0},c^{0}}\left(u^{\xi^{0}},\left[a;b\right]\right)\label{eq:formula1}
 =\sup_{n}\sup_{a<t_{1}<\cdots<t_{n}\leq b}\left\{ \left(u\left(t_{1}\right)-\xi^{0}-\frac{1}{2}c\right)_{+}+\sum_{i=2}^{n}\left(u\left(t_{i}\right)-u\left(t_{i-1}\right)-c\right)_{+}\right\} ,\end{eqnarray}
 
\begin{eqnarray}
 DTV^{-c^{0},c^{0}}\left(u^{\xi^{0}},\left[a;b\right]\right)\label{eq:formula2}
 =\sup_{n}\sup_{a<t_{1}<\cdots<t_{n}\leq b}\left\{ \left(\xi^{0}-u\left(t_{1}\right)+\frac{1}{2}c\right)_{+}+\sum_{i=2}^{n}\left(u\left(t_{i-1}\right)-u\left(t_{i}\right)-c\right)_{+}\right\} .
\end{eqnarray}
\end{rmk}
\begin{rmk}\label{bounds} We have the following
bounds for the total variation of the output of the play operator.
For any $t\geq0$ 
\[
TV^{\alpha,\beta}\left(u,\left[0;t\right]\right)\leq TV\left(\mathfrak{p}_{\alpha,\beta}\left[\xi^{0},u\right],\left[0;t\right]\right)\leq TV^{\alpha,\beta}\left(u,\left[0;t\right]\right)+\beta\left(0\right)-\alpha\left(0\right).
\]
The lower bound follows immediately from the fact that $\alpha\leq u-\mathfrak{p}_{\alpha,\beta}\left[\xi^{0},u\right]\leq\beta$
and the upper bound is simply the estimate of the total variation
of the function $u^{\alpha,\beta,\xi^{0}}$ defined in the proof of
Corollary \ref{Play_UTVDTV-1} and follows from (\ref{eq:gwiazdka})
since for $u^{\alpha,\beta,\xi^{0}}$ we have 
\begin{align*}
TV\left(u^{\alpha,\beta,\xi^{0}},\left[0;t\right]\right) & \leq\left|u^{\alpha,\beta,\xi^{0}}\left(0\right)-u^{\alpha,\beta}\left(0\right)\right|+TV\left(u^{\alpha,\beta},\left[0;t\right]\right)\\
 & \leq\beta\left(0\right)-\alpha\left(0\right)+TV^{\alpha,\beta}\left(u,\left[0;t\right]\right).
\end{align*}
Similar bounds hold for $UTV$ and $DTV.$ \end{rmk} 
In view of Corollaries
\ref{Play_UTVDTV-1}, \ref{Play_UTVDTV} and Definitions \ref{play_def-1},
\ref{play_def}, the play operator may be seen as the special case
of the solution of the following minimal variation problem: for given $\psi,  \alpha,\beta\in G[0;+\infty),$ such that $\alpha \leq \beta,$ find a function $\psi^{\alpha, \beta}$ such that $\alpha \leq \psi - \psi^{\alpha, \beta} \leq \beta $ with the minimal total variation possible on the intervals
$\left[t_{0};t\right],$ $t\geq t_{0},$ existence of which is guaranteed
by Theorem \ref{cond_tv_alpha_beta}. Unfortunately, when $\alpha\left(t_{0}\right)<\beta\left(t_{0}\right)$
the starting point of the solution is not specified in advance and
one has to look ``forward'' to find the optimal starting point.
This way we lose the semigroup property of the play operator: 
for $t>t_{0},$ 
\[
\mathfrak{p}_{\alpha,\beta}\left[\xi^{0},u\right]\left(t\right)=\mathfrak{p}_{\alpha,\beta}\left[\mathfrak{p}_{\alpha,\beta}\left[\xi^{0},u\right]\left(t_{0}\right),u\left(t_{0}+\cdot\right)\right]\left(t-t_{0}\right).
\]
Moreover, the solution of the minimal variation problem may not be unique (this may happen only when the solution is a constant function).
\begin{rmk} From the proof of Theorem \ref{cond_tv_alpha_beta} it
is possible to infer that the optimal starting point of $\psi^{\alpha,\beta}$
is 
\[
\psi^{\alpha,\beta}\left(t_{0}\right)=\begin{cases}
\inf_{t\in\left[t_{0};T_{U}\psi\right]}\left(\psi\left(t\right)-\alpha\left(t\right)\right)\mbox{ if }T_{U}\psi<T_{D}\psi;\\
\sup_{t\in\left[t_{0};T_{U}\psi\right]}\left(\psi\left(t\right)-\beta\left(t\right)\right)\mbox{ if }T_{U}\psi>T_{D}\psi,
\end{cases}
\]
where
\[
T_{U}\psi=\inf\left\{ t\geq t_{0}:\left(\psi-\beta\right)\left(t\right)>\inf_{s\in\left[t_{0};t\right]}\left(\psi-\alpha\right)\left(s\right)\right\} ,
\]
\[
T_{D}\psi=\inf\left\{ t\geq t_{0}:\left(\psi-\alpha\right)\left(t\right)<\sup_{s\in\left[t_{0};t\right]}\left(\psi-\beta\right)\left(s\right)\right\} .
\]
\end{rmk}

\section{The generalised play operator }

When the condition (\ref{eq:cond_finiteness}) is not satisfied, we
naturally can not apply the truncated variation formalism to define the
play operator. However, it is possible to extend the definition of
the play operator to cases when (\ref{eq:cond_finiteness}) does not
hold. First of such extensions, in a slightly different setting, based on differential inclusions, which may not coincide with ours, was given in \cite[Chapt. III, Theorem 2.2]{V}, compare also \cite[Chapt. III, formula (2.13)]{V} with formula (\ref{eq:recursion}). Next, the extension for any c\`{a}dl\`{a}g input and (symmetric) c\`{a}dl\`{a}g boundary conditions,  in similar setting as ours, is given e.g. by \cite[Corollary 2.3]{K1} (see
also \cite[Theorem 4.1]{R2008}). All these approaches utilize the Lipschitz continuity
of the play operator with respect to the inputs, boundaries (characteristics),
in sup norm, and starting conditions. The symmetry of the boundaries
is no restrictive, since for non-symmetric boundaries $\alpha\leq\beta,$
one may symmetrize the input $u,$ defining $\tilde{u}=u-\frac{1}{2}\left(\alpha+\beta\right),$
and apply to $\tilde{u}$ the play operator with symmetric characteristics
$-\frac{1}{2}\left(\beta-\alpha\right),$ $\frac{1}{2}\left(\beta-\alpha\right)$.
Similar extension for c\`{a}dl\`{a}g input and c\`{a}dl\`{a}g boundary conditions but obtained with a different definition, is given by \cite[Definition 2.2]{BurdzySPA}.
In \cite[Definition 2.2]{BurdzySPA} the starting condition $\xi^{0}$
is not specified, but it may be
imposed with the shift of the input - in \cite[Definition 2.2]{BurdzySPA}  one substitutes  $\psi$ with 
$\psi-\psi\left(0\right)+\xi^{0}.$
Some condition on characteristics guaranteeing that the output will
have infinite variation, when the input is a typical path of a standard
Brownian motion, is given by \cite[Theorem 4.3]{BurdzySPA}. 

Another extensions, for much more general inputs - $L^{\infty}$ or
Lebesgue measurable inputs, but only for constant, symmetric boundaries
- were given in \cite{KL1} and \cite{IE}. 

Below we extend slightly the definitions of \cite{K1}, \cite{R2008}
to any regulated input $u:\left[0;+\infty\right)\rightarrow\mathbb{R},$
regulated boundaries (characteristics) $\alpha,\mbox{ }\beta:\left[0;+\infty\right)\rightarrow\mathbb{R},$
with $\alpha\leq\beta$ and starting condition $\xi^{0}\in\left[u\left(0\right)-\beta\left(0\right);u\left(0\right)-\alpha\left(0\right)\right].$
We apply similar approximation techniques as in \cite[Corollary 2.3]{K1},
\cite[Theorem 4.1]{R2008}, i.e. we will utilize the fact that every
regulated function $\chi:\left[0;+\infty\right)\rightarrow\mathbb{R}$
may be uniformly approximated by step functions of the form 
\[
h\left(t\right)=\sum_{k=0}^{\infty}h_{2k}1_{\left\{ t_{2k}\right\} }\left(t\right)+\sum_{k=0}^{\infty}h_{2k+1}1_{\left(t_{2k};t_{2k+1}\right)}\left(t\right),
\]
where $0=t_{0}<t_{1}<\ldots$ and $\lim_{k\rightarrow+\infty}t_{k}=+\infty.$
The set of such functions will be denoted by $St\left[0;+\infty\right).$
Now, for any characteristics $\alpha,$ $\beta\in St\left[0;+\infty\right),$
with $\alpha\leq\beta$ and step input $u$ the value of the output
of the play operator is the composition of the following ``one-step
operators'': 
\begin{equation} \label{eq:recursion}
\mathfrak{p}^{i}\left[\xi^{i-1}\right]:=\min\left\{ \max\left\{ u^{i}-\beta^{i},\xi^{i-1}\right\} ,u^{i}-\alpha^{i}\right\}  = \begin{cases}
u^i - \alpha^i & \mbox{if } \xi^{i-1} > u^i - \alpha^i;\\
\xi^{i-1} & \mbox{if } \xi^{i-1} \in \left[u^i - \beta^i ; u^i - \alpha^i \right];\\
 u^i - \beta^i & \mbox{if }\xi^{i-1} < u^i - \beta^i,
\end{cases}
\end{equation}
where 
\[
\alpha=\sum_{k=0}^{\infty}\alpha^{2k}1_{\left\{ t_{k}\right\} }\left(\cdot\right)+\sum_{k=0}^{\infty}\alpha^{2k+1}1_{\left(t_{k};t_{k+1}\right)}\left(\cdot\right)
\]
and similar representation (with the same $t_{i},$ $i=0,1,\ldots$)
holds for $\beta$ and $u.$ More precisely, for $t=t_{0},t_{1},\ldots$
we have $\mathfrak{p}_{\alpha,\beta}\left[\xi^{0},u\right]\left(t_{k}\right)=\xi^{2k}$
and for $t\in\left(t_{k};t_{k+1}\right)$ we have $\mathfrak{p}_{\alpha,\beta}\left[\xi^{0},u\right]\left(t\right)=\xi^{2k+1},$
$k=0,1,\ldots,$ where $\xi^{0}$ is given and $\xi^{i},$ $i=1,2\ldots,$
are defined recursively with (\ref{eq:recursion}), i.e. $\xi^{i}:=\mathfrak{p}^{i}\left[\xi^{i-1}\right].$ This algorithm produces such $\xi^i$ that the difference $\xi^{i} - \xi^{i-1}$ is the smallest possible and the relations $\alpha^i \leq u^i - \xi^i \leq \beta^i$ hold.
It is not difficult to observe that for a step input and step characteristics
the just defined operator $\mathfrak{p}_{\alpha,\beta}\left[\xi^{0},\cdot\right]$
coincides with the play operator of Definition \ref{play_def-1}.
\begin{rmk}
The fact that for step functions both definitions coincide follows easily e.g. by Theorem \ref{cond_tv_alpha_beta} and by the induction with respect to the number of steps. 
Unfortunately, it is not the case in higher dimensions. It is possible to construct an example of a step function attaining its values in the space $\mathbb{R}^2$ with the Euclidean metric, showing that even for constant boundary being the unit ball $B_2 := \left\{(x,y)\in \mathbb{R}^2: x^2 + y^2 \leq 1 \right\}$ the definition based on the counterpart to the greedy recursion (\ref{eq:recursion}) does not lead to the output with the smallest total variation possible on every interval $[0;t],$ $t>0.$ For example, for $u(t)=(2,0)1_{\left\{0\right\}}(t)+(0,0)1_{(0;1]}(t)+(0,2)1_{(1;+\infty)}(t),$ $\xi_0 = (2,0)$ such an algorithm would produce the output $\xi(t) = (2,0)1_{\left\{0\right\}}(t)+(1,0)1_{(0;1]}(t)+(1/\sqrt{5},2-2/\sqrt{5})1_{(1;+\infty)}(t) $ while the function $\chi (t) = (2,0)1_{\left\{0\right\}}(t)+(3/5,4/5)1_{(0;1]}(t)+(1/\sqrt{5},2-2/\sqrt{5})1_{(1;+\infty)}(t) $ has smaller variation on the interval $[0;2]$ and $\chi(0)=\xi_0,$ $\chi - u \in B_2.$
\end{rmk}

Now we will prove that for every two one-step operators 
\[
\mathfrak{p}_{1}\left[x\right]=\min\left\{ \max\left\{ \eta_{1},x\right\} ,\theta_{1}\right\} \mbox{ and }\mathfrak{p}_{2}\left[x\right]=\min\left\{ \max\left\{ \eta_{2},x\right\} ,\theta_{2}\right\} 
\]
we have 
\begin{equation}
\left|\mathfrak{p}_{1}\left[x_{1}\right]-\mathfrak{p}_{2}\left[x_{2}\right]\right|\leq\max\left\{ \left|x_{1}-x_{2}\right|,\left|\eta_{1}-\eta_{2}\right|,\left|\theta_{1}-\theta_{2}\right|\right\} .\label{eq:inequality}
\end{equation}
Though the inequality (\ref{eq:inequality}) is elementary, for reader's
convenience we present its proof.

\begin{proof} For any $x,\mbox{ }y,\mbox{ }z,\mbox{ }t\in\mathbb{R}$
we have 
\[
\min\left\{ z,t\right\} +\max\left\{ \left|x-z\right|,\left|y-t\right|\right\} \geq\min\left\{ z+\left|x-z\right|,t+\left|y-t\right|\right\} \geq\min\left\{ x,y\right\} 
\]
and 
\[
\min\left\{ x,y\right\} +\max\left\{ \left|x-z\right|,\left|y-t\right|\right\} \geq\min\left\{ x+\left|z-x\right|,y+\left|t-y\right|\right\} \geq\min\left\{ z,t\right\} ,
\]
hence 
\[
\max\left\{ \left|x-z\right|,\left|y-t\right|\right\} \geq\left|\min\left\{ x,y\right\} -\min\left\{ z,t\right\} \right|.
\]
Similarly, 
\[
\max\left\{ \left|x-z\right|,\left|y-t\right|\right\} \geq\left|\min\left\{ -x,-y\right\} -\min\left\{ -z,-t\right\} \right|=\left|\max\left\{ x,y\right\} -\max\left\{ z,t\right\} \right|
\]
Now, applying these inequalities we get 
\begin{align*}
\left|\mathfrak{p}_{1}\left[x_{1}\right]-\mathfrak{p}_{2}\left[x_{2}\right]\right| & =\left|\min\left\{ \max\left\{ \eta_{1},x_{1}\right\} ,\theta_{1}\right\} -\min\left\{ \max\left\{ \eta_{2},x_{1}\right\} ,\theta_{2}\right\} \right|\\
 & \leq\max\left\{ \left|\max\left\{ \eta_{1},x\right\} -\max\left\{ \eta_{2},x\right\} \right|,\left|\theta_{1}-\theta_{2}\right|\right\} \\
 & \leq\max\left\{ \max\left\{ \left|\eta_{1}-\eta_{2}\right|,\left|x_{1}-x_{2}\right|\right\} ,\left|\theta_{1}-\theta_{2}\right|\right\} \\
 & =\max\left\{ \left|x_{1}-x_{2}\right|,\left|\eta_{1}-\eta_{2}\right|,\left|\theta_{1}-\theta_{2}\right|\right\} .
\end{align*}

\end{proof} 

Now, by (\ref{eq:inequality}) and by recursion (\ref{eq:recursion})
we get 
\begin{align}
 & \left|\mathfrak{p}_{\alpha_{1},\beta_{1}}\left[\xi_{1}^{0},u_{1}\right]\left(t\right)-\mathfrak{p}_{\alpha_{2},\beta_{2}}\left[\xi_{2}^{0},u_{2}\right]\left(t\right)\right|\nonumber \\
 & \leq\max\left\{ \left|\xi_{1}^{0}-\xi_{2}^{0}\right|,\left\Vert \left(\beta_{1}-u_{1}\right)-\left(\beta_{2}-u_{2}\right)\right\Vert _{\left[0;+\infty\right)},\left\Vert \left(\alpha_{1}-u_{1}\right)-\left(\alpha_{2}-u_{2}\right)\right\Vert _{\left[0;+\infty\right)}\right\} \nonumber \\
 & \leq\max\left\{ \left|\xi_{1}^{0}-\xi_{2}^{0}\right|,\max\left\{ \left\Vert \alpha_{1}-\alpha_{2}\right\Vert _{\left[0;+\infty\right)}+\left\Vert \beta_{1}-\beta_{2}\right\Vert _{\left[0;+\infty\right)}\right\} +\left\Vert u_{1}-u_{2}\right\Vert _{\left[0;+\infty\right)}\right\} .\label{eq:inequality1}
\end{align}
By (\ref{eq:inequality1}) and the usual approximation argument (cf.
proof of \cite[Theorem 4.1]{R2008}), the play operator may be extended
to any regulated input $u$ and any regulated characteristics $\alpha,$
$\beta,$ with $\alpha\leq\beta.$ Thus we formulate 

\begin{defn} \label{play_def-1-1} Let $\alpha,\beta,u\in G\left[0;+\infty\right),$ $\alpha \leq \beta,$
$\xi^{0}\in$$\left[u\left(0\right)-\beta\left(0\right);u\left(0\right)-\alpha\left(0\right)\right].$
For any sequences $\alpha_{n},\beta_{n},u_{n}\in St[0;+\infty)$ and
$\xi_{n}^{0}\in\mathbb{R}$ such that $\alpha_n \leq \beta_n,$
$\xi^{0}_n\in$$\left[u_n(0)-\beta_n(0);u_n(0)-\alpha_n(0)\right]$ and 
\begin{equation}
\left\Vert u_{n}-u\right\Vert _{\left[0;+\infty\right)}+\left\Vert \alpha_{n}-\alpha\right\Vert _{\left[0;+\infty\right)}+\left\Vert \beta_{n}-\beta\right\Vert _{\left[0;+\infty\right)}+\left|\xi_{n}^{0}-\xi^{0}\right|\rightarrow0\label{eq:uniform}
\end{equation}
the limit $\mathfrak{p}_{\alpha_{n},\beta_{n}}\left[\xi_{n}^{0},u_{n}\right]$
exists (in sup norm $\left\Vert \cdot\right\Vert _{\left[0;+\infty\right)}$)
and is unique. Hence, we define the \emph{generalised play operator}
$\tilde{\mathfrak{p}}_{\alpha,\beta}\left[\xi^{0},\cdot\right]$ with
the following formula 
\[
\tilde{\mathfrak{p}}_{\alpha,\beta}\left[\xi^{0},u\right]\left(t\right):=\lim_{n\rightarrow+\infty}\mathfrak{p}_{\alpha_{n},\beta_{n}}\left[\xi_{n}^{0},u_{n}\right]\left(t\right).
\]

\end{defn} 

Of course, we did not proved that the just defined extension coincides
with the play operator of Definition \ref{play_def-1} for any regulated
input $u$ and any regulated characteristics $\alpha,$ $\beta,$
$\alpha\leq\beta,$ satisfying condition (\ref{eq:cond_finiteness}).
This is the subject of the following

\begin{lem} For $u\in G[0;+\infty),$ $\xi^{0}\in\mathbb{R}$ and
characteristics $\alpha,\beta\in G[0;+\infty),$ such that $\alpha\leq\beta,$
$\xi^{0}\in\left[u\left(0\right)-\beta\left(0\right);u\left(0\right)-\alpha\left(0\right)\right]$
and condition (\ref{eq:cond_finiteness}) holds, we have 
\[
\tilde{\mathfrak{p}}_{\alpha,\beta}\left[\xi^{0},u\right]=\mathfrak{p}_{\alpha,\beta}\left[\xi^{0},u\right].
\]
\end{lem}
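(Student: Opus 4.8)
The plan is to leverage two facts already established: that the two operators coincide on step data, and that the limit defining $\tilde{\mathfrak{p}}_{\alpha,\beta}[\xi^{0},u]$ is, by the Lipschitz estimate (\ref{eq:inequality1}), the same in sup norm for \emph{every} admissible sequence of step approximations. Since condition (\ref{eq:cond_finiteness}) is assumed, $\mathfrak{p}_{\alpha,\beta}[\xi^{0},u]$ is a well-defined function of locally bounded variation, and by Corollary \ref{Play_UTVDTV-1} it admits the representation (\ref{eq:Jordan33}) through $UTV^{\alpha^{0},\beta^{0}}(u^{\xi^{0}},[0;\cdot])$ and $DTV^{\alpha^{0},\beta^{0}}(u^{\xi^{0}},[0;\cdot])$. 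The strategy is to identify the right-hand side of (\ref{eq:Jordan33}) as the pointwise limit of the analogous expressions for one conveniently chosen sequence of step approximations; combined with the sup-norm existence of the defining limit, this forces $\tilde{\mathfrak{p}}_{\alpha,\beta}[\xi^{0},u]=\mathfrak{p}_{\alpha,\beta}[\xi^{0},u]$.

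First I would reduce to a single good sequence. By (\ref{eq:inequality1}) it suffices to produce one admissible family $(\alpha_{n},\beta_{n},u_{n},\xi^{0}_{n})$ of step functions, converging uniformly to $(\alpha,\beta,u)$ with $\xi^{0}_{n}\to\xi^{0}$, along which $\mathfrak{p}_{\alpha_{n},\beta_{n}}[\xi^{0}_{n},u_{n}]$ converges to $\mathfrak{p}_{\alpha,\beta}[\xi^{0},u]$; the value furnished by Definition \ref{play_def-1-1} must then equal that limit. By the remark following (\ref{eq:recursion}), for step data the operator $\mathfrak{p}_{\alpha_{n},\beta_{n}}[\xi^{0}_{n},u_{n}]$ occurring in that definition \emph{is} the finite-variation play operator of Definition \ref{play_def-1}, so Corollary \ref{Play_UTVDTV-1} applies to each $n$ and writes it via $UTV^{\alpha_{n}^{0},\beta_{n}^{0}}(u_{n}^{\xi^{0}_{n}},[0;\cdot])$ and $DTV^{\alpha_{n}^{0},\beta_{n}^{0}}(u_{n}^{\xi^{0}_{n}},[0;\cdot])$ exactly as in (\ref{eq:Jordan33}). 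The whole question thus reduces to convergence of these upward and downward truncated variations.

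For the good sequence I would take precisely the widened step approximations built in Step 2 of the proof of Theorem \ref{cond_tv_alpha_beta}: step functions $\bar{\alpha}^{c},\bar{\beta}^{c},u^{c}$ within $c/2$ of $\alpha,\beta,u$, together with $\alpha_{n}=\bar{\alpha}^{c}-3c$, $\beta_{n}=\bar{\beta}^{c}+3c$ (with $c=c_{n}\downarrow0$), and I would keep the starting value $\xi^{0}_{n}=\xi^{0}$. These converge uniformly to $\alpha,\beta,u$ because the $\pm3c$ shifts vanish, the widening guarantees $\gamma^{c}\geq\gamma+4c$ and keeps $\xi^{0}$ admissible for each $n$, and after the collapse at $0$ the data $(\alpha_{n}^{0},\beta_{n}^{0},u_{n}^{\xi^{0}})$ still converge uniformly to $(\alpha^{0},\beta^{0},u^{\xi^{0}})$. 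For exactly such widened families, Step 2 of that proof has already established the pointwise convergence, as $c\downarrow0$, of the truncated variations to $UTV^{\alpha^{0},\beta^{0}}(u^{\xi^{0}},[0;t])$ and $DTV^{\alpha^{0},\beta^{0}}(u^{\xi^{0}},[0;t])$ (the collapsed endpoint is harmless once $\xi^{0}_{n}=\xi^{0}$, so $\tilde{\psi}$ matches there). Substituting into (\ref{eq:Jordan33}) and letting $n\to\infty$ then yields the desired pointwise, hence sup-norm, limit.

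The main obstacle is exactly this convergence of the $\alpha,\beta$-truncated variations under uniform approximation: it is \emph{not} continuous for arbitrary perturbations, since when the gap $\gamma=\beta-\alpha$ is nearly tight a small uniform error can create additional admissible increments and inflate the variation. This is circumvented by not using an arbitrary approximating sequence but the specific widened one of Step 2, for which the one-sided bound $TV^{\alpha_{n},\beta_{n}}(\cdot)\leq TV^{\alpha,\beta}(\cdot)$ together with the matching $\limsup$ lower bound forces convergence; the freedom to make this choice is precisely what the sequence-independence of the defining limit, guaranteed by (\ref{eq:inequality1}), provides.
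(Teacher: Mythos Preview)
Your proposal is correct and follows essentially the same route as the paper: both arguments pick the specific widened step approximations from Step~2 of the proof of Theorem~\ref{cond_tv_alpha_beta}, use the pointwise convergence of $UTV^{\alpha^{c},\beta^{c}}$ and $DTV^{\alpha^{c},\beta^{c}}$ established there together with the representation of Corollary~\ref{Play_UTVDTV-1} to get $\mathfrak{p}_{\alpha^{c},\beta^{c}}[\xi^{0},u^{c}]\to\mathfrak{p}_{\alpha,\beta}[\xi^{0},u]$, and then invoke the coincidence on step data and the Lipschitz estimate (\ref{eq:inequality1}) to identify this limit with $\tilde{\mathfrak{p}}_{\alpha,\beta}[\xi^{0},u]$. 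Your explicit remark that the freedom to choose a single convenient approximating sequence is exactly what (\ref{eq:inequality1}) buys, and that an arbitrary sequence would not give convergence of the truncated variations, makes the logic a bit more transparent than the paper's presentation.
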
\begin{proof} Let us fix $c>0$ and consider function $u^{\xi^{0}}$
and boundaries $\alpha^{0},$ $\beta^{0}$ defined in Corollary \ref{Play_UTVDTV-1}.
It is possible to approximate $u^{\xi^{0}},$ $\alpha^{0}$ and $\beta^{0}$
uniformly with accuracy $4c$ by step functions $u^{c,\xi^{0}},$
$\alpha^{c}$ and $\beta^{c}$ such that $\alpha^{c} \leq \beta^{c}$ and we have pointwise convergences
$UTV^{\alpha^{c},\beta^{c}}\left(u^{c,\xi^{0}},\left[0;\cdot\right]\right)\rightarrow UTV^{\alpha^{0},\beta^{0}}\left(u^{\xi^{0}},\left[0;\cdot\right]\right),$
$DTV^{\alpha^{c},\beta^{c}}\left(u^{c,\xi^{0}},\left[0;\cdot\right]\right)\rightarrow DTV^{\alpha^{0},\beta^{0}}\left(u^{\xi^{0}},\left[0;\cdot\right]\right)$
as $c\downarrow0.$ The detailed construction is given in Step 2 of
the proof of Theorem \ref{cond_tv_alpha_beta}. Thus,
by Corollary \ref{Play_UTVDTV-1} we get the pointwise convergence 
\begin{align*}
\mathfrak{p}_{\alpha^{c},\beta^{c}}\left[\xi^{0},u^{c,\xi^{0}}\right] & =\xi^{0}+UTV^{\alpha^{c},\beta^{c}}\left(u^{c,\xi^{0}},\left[0;\cdot\right]\right)-DTV^{\alpha^{c},\beta^{c}}\left(u^{c,\xi^{0}},\left[0;\cdot\right]\right)\\
 & \rightarrow\mathfrak{p}_{\alpha,\beta}\left[\xi^{0},u\right]
\end{align*}
as $c\downarrow0.$ On the other hand, from Definition \ref{play_def-1}
it is not difficult to see that for step functions $u^{c,\xi^{0}},$
$\alpha^{c}$ and $\beta^{c}$ we have 
\[
\mathfrak{p}_{\alpha^{c},\beta^{c}}\left[\xi^{0},u^{c,\xi^{0}}\right]=\mathfrak{\tilde{p}}_{\alpha^{c},\beta^{c}}\left[\xi^{0},u^{c,\xi^{0}}\right].
\]
Now, by the just established estimate (\ref{eq:inequality1}) for
step functions we know that the uniform convergence on the halfline $[0;+\infty),$ 
\[
\tilde{\mathfrak{p}}_{\alpha^{c},\beta^{c}}\left[\xi^{0},u^{c}\right]\rightrightarrows\tilde{\mathfrak{p}}_{\alpha,\beta}\left[\xi^{0},u\right],
\]
holds as $c\downarrow0.$ Thus, both operators coincide for $u\in G[0;+\infty),$
$\xi^{0}\in\mathbb{R}$ and characteristics $\alpha,$ $\beta\in G[0;+\infty),$
such that $\alpha\leq\beta,$ $\xi^{0}\in\left[u\left(0\right)-\beta\left(0\right);u\left(0\right)-\alpha\left(0\right)\right]$
and condition (\ref{eq:cond_finiteness}) holds.\end{proof}

From inequality (\ref{eq:inequality1}) and Definition \ref{play_def-1-1}
we naturally obtain the Lipschitz continuity of the generalised play
operator, i.e. we have \begin{lem} For $u_{1},u_{2}\in G[0;+\infty),$
$\xi_{1}^{0},\xi_{2}^{0}\in\mathbb{R}$ and characteristics $\alpha_{1},\alpha_{2},\beta_{1},\beta_{2}\in G[0;+\infty),$
such that $\alpha_{1}\leq\beta_{1},$ $\alpha_{2}\leq\beta_{2},$
$\xi_{1}^{0}\in\left[u_{1}\left(0\right)-\beta_{1}\left(0\right);u_{1}\left(0\right)-\alpha_{1}\left(0\right)\right],$
$\xi_{2}^{0}\in\left[u_{2}\left(0\right)-\beta_{2}\left(0\right);u_{2}\left(0\right)-\alpha_{2}\left(0\right)\right]$
we have 
\begin{align*}
 & \left\Vert \tilde{\mathfrak{p}}_{\alpha_{1},\beta_{1}}\left[\xi_{1}^{0},u_{1}\right]-\tilde{\mathfrak{p}}_{\alpha_{2},\beta_{2}}\left[\xi_{2}^{0},u_{2}\right]\right\Vert _{\left[0;+\infty\right)}\\
 & \leq\max\left\{ \left|\xi_{1}^{0}-\xi_{2}^{0}\right|,\max\left\{ \left\Vert \alpha_{1}-\alpha_{2}\right\Vert _{\left[0;+\infty\right)}+\left\Vert \beta_{1}-\beta_{2}\right\Vert _{\left[0;+\infty\right)}\right\} +\left\Vert u_{1}-u_{2}\right\Vert _{\left[0;+\infty\right)}\right\} .
\end{align*}
\end{lem}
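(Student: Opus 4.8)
The plan is to bootstrap the estimate (\ref{eq:inequality1}), which is already established for step data, up to arbitrary regulated data by passing to the limit through the very approximating sequences built into Definition \ref{play_def-1-1}. Since each generalised output $\tilde{\mathfrak{p}}_{\alpha_i,\beta_i}\left[\xi_i^0,u_i\right]$ is, by definition, the sup-norm limit of the step-operators $\mathfrak{p}_{\alpha_{i,n},\beta_{i,n}}\left[\xi_{i,n}^0,u_{i,n}\right]$, and the right-hand side of (\ref{eq:inequality1}) enters only through sup-norm distances between the data, the inequality should survive the limit essentially verbatim.

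First I would fix, for each $i\in\{1,2\}$, admissible step approximants $u_{i,n},\alpha_{i,n},\beta_{i,n}\in St[0;+\infty)$ and reals $\xi_{i,n}^0$ satisfying $\alpha_{i,n}\leq\beta_{i,n}$, the starting constraint $\xi_{i,n}^0\in\left[u_{i,n}(0)-\beta_{i,n}(0);u_{i,n}(0)-\alpha_{i,n}(0)\right]$, and the uniform convergence (\ref{eq:uniform}). Such sequences exist because every regulated function is a uniform limit of functions from $St[0;+\infty)$; to preserve $\alpha_{i,n}\leq\beta_{i,n}$ it suffices to build both approximants on a common partition by sampling the actual values of $\alpha_i$ and $\beta_i$, which respects the pointwise order $\alpha_i\leq\beta_i$, and the constraint on $\xi_{i,n}^0$ can then be met by projecting $\xi_i^0$ onto the admissible interval, whose endpoints converge to those of $\left[u_i(0)-\beta_i(0);u_i(0)-\alpha_i(0)\right]$. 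For every fixed $n$ and every $t\geq 0$ the inequality (\ref{eq:inequality1}) applies to these step data and gives
\begin{align*}
 & \left|\mathfrak{p}_{\alpha_{1,n},\beta_{1,n}}\left[\xi_{1,n}^{0},u_{1,n}\right]\left(t\right)-\mathfrak{p}_{\alpha_{2,n},\beta_{2,n}}\left[\xi_{2,n}^{0},u_{2,n}\right]\left(t\right)\right|\\
 & \leq\max\left\{\left|\xi_{1,n}^{0}-\xi_{2,n}^{0}\right|,\left\Vert \alpha_{1,n}-\alpha_{2,n}\right\Vert _{\left[0;+\infty\right)}+\left\Vert \beta_{1,n}-\beta_{2,n}\right\Vert _{\left[0;+\infty\right)}+\left\Vert u_{1,n}-u_{2,n}\right\Vert _{\left[0;+\infty\right)}\right\}.
\end{align*}

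Finally I would pass to the limit $n\to\infty$. By Definition \ref{play_def-1-1} the left-hand side converges, for each fixed $t$, to $\left|\tilde{\mathfrak{p}}_{\alpha_1,\beta_1}\left[\xi_1^0,u_1\right](t)-\tilde{\mathfrak{p}}_{\alpha_2,\beta_2}\left[\xi_2^0,u_2\right](t)\right|$. The right-hand side is a constant independent of $t$ which converges to $C:=\max\left\{\left|\xi_1^0-\xi_2^0\right|,\left\Vert \alpha_1-\alpha_2\right\Vert_{\left[0;+\infty\right)}+\left\Vert \beta_1-\beta_2\right\Vert_{\left[0;+\infty\right)}+\left\Vert u_1-u_2\right\Vert_{\left[0;+\infty\right)}\right\}$: indeed, the reverse triangle inequality for the sup-norm gives $|\,\|\alpha_{1,n}-\alpha_{2,n}\|-\|\alpha_1-\alpha_2\|\,|\leq\|\alpha_{1,n}-\alpha_1\|+\|\alpha_{2,n}-\alpha_2\|\to 0$ by (\ref{eq:uniform}), and likewise for the $\beta$- and $u$-terms and for $|\xi_{i,n}^0-\xi_i^0|$. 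Hence $\left|\tilde{\mathfrak{p}}_{\alpha_1,\beta_1}\left[\xi_1^0,u_1\right](t)-\tilde{\mathfrak{p}}_{\alpha_2,\beta_2}\left[\xi_2^0,u_2\right](t)\right|\leq C$ for every $t\geq 0$, and taking the supremum over $t\geq 0$ yields the asserted inequality. The only real point requiring care is the bookkeeping of the first step, namely producing step approximants that are simultaneously admissible and uniformly convergent; once such sequences are in hand the result follows immediately from the continuity of the data-to-output map that is built into Definition \ref{play_def-1-1}, together with the elementary continuity of the sup-norm under uniform convergence.
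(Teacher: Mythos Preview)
Your argument is correct and is precisely the approach the paper indicates: the paper does not give a separate proof but simply states that the lemma follows ``from inequality (\ref{eq:inequality1}) and Definition \ref{play_def-1-1}'', which is exactly the limit-passage you carry out. Your added care about constructing admissible step approximants (preserving $\alpha_{i,n}\leq\beta_{i,n}$ and the starting constraint) is a useful detail the paper leaves implicit.
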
 
Unfortunately, even when the condition (\ref{eq:cond_finiteness})
is satisfied we shall not expect a similar result for the total
variations of the output, i.e. that for any sequences $\alpha_{n},\beta_{n},u_{n}\in G[0;+\infty)$
and $\xi_{n}^{0}\in\mathbb{R}$ satisfying conditions appearing in Definition \ref{play_def-1-1} we will have convergence, or at least a local uniform bound of the form (cf. \cite[Lemma 4.4]{KL2}): for every $t>0$ there exist such $C\left(t\right)$
that 
\[
\sup_{n\geq1}TV\left(\mathfrak{p}_{\alpha_{n},\beta_{n}}\left[\xi_{n}^{0},u_{n}\right],\left[0;t\right]\right)\leq C\left(t\right).
\]
However, in the one-dimensional case when $\gamma_{0}>0$ (the definition
of $\gamma_{0}$ is given in Remark \ref{gamma}) we have the following,
more precise statement than \cite[Lemma 4.4]{KL2}: 
\begin{lem}
Let $\alpha,\beta,u\in G\left[0;+\infty\right),$ $\xi^{0}\in$ $\left[u\left(0\right)-\beta\left(0\right);u\left(0\right)-\alpha\left(0\right)\right]$
and assume that $\inf_{t\geq0}\left\{ \beta\left(t\right)-\alpha\left(t\right)\right\} >0.$
For any sequences $\alpha_{n},\beta_{n},u_{n}\in G[0;+\infty)$ and
$\xi_{n}^{0}\in\mathbb{R}$ satisfying $\alpha_n \leq \beta_n,$
$\xi^{0}_n\in$ $\left[u_n(0)-\beta_n(0);u_n(0)-\alpha_n(0)\right]$ and (\ref{eq:uniform}), functions
$TV^{\alpha_{n},\beta_{n}}\left(u_{n},\left[0;\cdot\right]\right),$
$UTV^{\alpha_{n},\beta_{n}}\left(u_{n},\left[0;\cdot\right]\right)$
and $DTV^{\alpha_{n},\beta_{n}}\left(u_{n},\left[0;\cdot\right]\right)$
converge uniformly on compact subsets of $\left[0;+\infty\right)$
to $TV^{\alpha,\beta}\left(u,\left[0;\cdot\right]\right),$ $UTV^{\alpha,\beta}\left(u,\left[0;\cdot\right]\right)$
and $DTV^{\alpha,\beta}\left(u,\left[0;\cdot\right]\right)$ respectively.
Similar convergences hold for total, positive and negative variations
of the outputs of the play operators $\mathfrak{p}_{\alpha_{n},\beta_{n}}\left[\xi_{n}^{0},u_{n}\right].$
\end{lem}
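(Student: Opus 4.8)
The plan is to prove the sharper quantitative statement that, on every compact $[0;T]$, the three functionals differ from their limits by at most a fixed multiple of
\[
\epsilon_n := \left\Vert u_n-u\right\Vert _{[0;+\infty)}+\left\Vert \alpha_n-\alpha\right\Vert _{[0;+\infty)}+\left\Vert \beta_n-\beta\right\Vert _{[0;+\infty)}+\left|\xi_n^0-\xi^0\right|,
\]
which tends to $0$ by (\ref{eq:uniform}). Writing $\tilde u=u-\tfrac12(\alpha+\beta)$, $\gamma=\beta-\alpha$ (and $\tilde u_n,\gamma_n$ analogously) one has $\left\Vert \tilde u_n-\tilde u\right\Vert _{[0;+\infty)}\le\epsilon_n$ and $\left\Vert \gamma_n-\gamma\right\Vert _{[0;+\infty)}\le\epsilon_n$. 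Since $TV^{\alpha,\beta}=UTV^{\alpha,\beta}+DTV^{\alpha,\beta}$ (the Remark following Theorem \ref{cond_tv_alpha_beta}), it suffices to run the argument for one of the three; I carry it out for $TV^{\alpha,\beta}$, the one-sided versions being identical after removing the absolute value.

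The naive term-by-term comparison fails, because the per-term error $3\epsilon_n$ would be summed over partitions with unboundedly many points. The crux is that the hypothesis $\gamma_0:=\inf_{t\ge0}(\beta-\alpha)(t)>0$ forces the number of \emph{active} (positively contributing) increments to be bounded uniformly in $n$. I first record the elementary fact, valid for any regulated $f$ on $[0;T]$ and any $\delta>0$: choosing a finite partition $0=s_0<\cdots<s_M=T$ with oscillation $<\delta$ on each open cell $(s_{j-1};s_j)$ (such a partition exists, this being equivalent to uniform approximability by step functions), one sees that along any increasing family $t_0<\cdots<t_m$ in $[0;T]$ the index of the cell (open interval or boundary singleton) containing $t_i$ is nondecreasing, and increases strictly whenever $|f(t_i)-f(t_{i-1})|>\delta$, since two points with such a gap cannot lie in one open cell; hence the number of such ``big'' increments is at most $2M=:N(f,\delta,T)$. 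Now every summand that contributes positively to $TV^{\alpha_n,\beta_n}(u_n,[0;t])$ requires $|\tilde u_n(t_i)-\tilde u_n(t_{i-1})|>\tfrac12(\gamma_n(t_{i-1})+\gamma_n(t_i))\ge\inf_s\gamma_n(s)\ge\gamma_0/2$ for $n$ large, whence $|\tilde u(t_i)-\tilde u(t_{i-1})|>\gamma_0/4$. Applying the fact to the \emph{fixed} function $\tilde u$ with $\delta=\gamma_0/4$ bounds the number of active pairs by $N:=N(\tilde u,\gamma_0/4,T)$, uniformly in $n$ and in $t\le T$; the same bound holds for the limit problem, whose active increments exceed $\gamma_0>\gamma_0/4$. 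This uniform combinatorial control is the only place where $\gamma_0>0$ is used, and it is the main obstacle.

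The transfer is then routine. For an arbitrary partition of $[0;t]$, using that $(\cdot)_+$ is $1$-Lipschitz together with the two sup-norm bounds, each of the at most $N$ active $n$-terms is at most the corresponding limit quantity $\bigl(|\tilde u(t_i)-\tilde u(t_{i-1})|-\tfrac12(\gamma(t_{i-1})+\gamma(t_i))\bigr)_+$ plus $3\epsilon_n$. Collecting the endpoints of these active (hence interior-disjoint) edges into a single increasing family, they remain consecutive pairs of the resulting admissible partition, so the sum of those limit quantities is dominated by $TV^{\alpha,\beta}(u,[0;t])$. Taking the supremum over partitions yields $TV^{\alpha_n,\beta_n}(u_n,[0;t])\le TV^{\alpha,\beta}(u,[0;t])+3N\epsilon_n$ for all $t\le T$; exchanging the roles of the limit and the $n$-th data gives the reverse inequality. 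Hence $\sup_{t\le T}\bigl|TV^{\alpha_n,\beta_n}(u_n,[0;t])-TV^{\alpha,\beta}(u,[0;t])\bigr|\le3N\epsilon_n\to0$, and likewise for $UTV$ and $DTV$.

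For the outputs I would invoke Corollary \ref{Play_UTVDTV-1}, which identifies $TV(\mathfrak{p}_{\alpha_n,\beta_n}[\xi_n^0,u_n],[0;t])$ with $TV^{\alpha_n^0,\beta_n^0}(u_n^{\xi_n^0},[0;t])$, and similarly for $UTV,DTV$. The modified data $u_n^{\xi_n^0},\alpha_n^0,\beta_n^0$ agree with $u_n,\alpha_n,\beta_n$ off $t=0$ and converge uniformly (the value at $0$ converges because $\xi_n^0\to\xi^0$), so the estimate above applies. The only wrinkle is $\gamma_n^0(0)=0$, so the lower bound $\gamma_0/2$ on $\gamma_n^0$ fails at the single point $t=0$; this can cost at most the one increment issuing from the origin, so the uniform bound on active pairs rises from $N$ to $N+1$ and the conclusion is unchanged, giving the claimed uniform convergence of the output variations on compacts.
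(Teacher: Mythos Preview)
Your proof is correct and follows essentially the same line as the paper's. Both arguments hinge on the observation that the hypothesis $\gamma_0>0$, together with the regulated oscillation property of $\tilde u$, forces a uniform bound $N$ on the number of positively contributing increments in any partition of $[0;T]$; the per-term error (your $3\epsilon_n$, the paper's $\varepsilon$) is then summed over only $N$ terms, and the reverse inequality is obtained symmetrically. Your treatment is marginally more explicit---you give the quantitative bound $3N\epsilon_n$ directly and you spell out the $t=0$ wrinkle in the reduction via Corollary~\ref{Play_UTVDTV-1}, which the paper leaves implicit---but the structure and the key idea are the same.
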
 
\begin{proof} Let us fix $T>0$ and notice that for any
$\delta>0,$ due to the regularity of $u,$ there exists a number
$N\left(T,\delta\right)$ such that for any $0\leq t_{0}<t_{1}<\ldots<t_{N}\leq t_{N+1}=T$
we have 
\begin{equation}
\mbox{if }\left|u\left(t_{i}\right)-u\left(t_{i-1}\right)\right|>\delta\mbox{ or }\left|u\left(t_{i}\right)-u\left(t_{i+1}\right)\right|>\delta\mbox{ for }i=1,2,\ldots,N\mbox{ then }N\leq N\left(T,\delta\right).\label{eq:uniform_osc}
\end{equation}
Also, for $\varepsilon>0$ and the sequences $\alpha_{n},\beta_{n},u_{n}\in G[0;+\infty)$
we may find $n\left(\varepsilon\right)$ such that for $n\geq n\left(\varepsilon\right)$
and any $0\leq s<t\leq T,$ 
\begin{equation}
\left|\tilde{u}_{n}\left(t\right)-\tilde{u}_{n}\left(s\right)\right|-\frac{1}{2}\left(\gamma_{n}\left(t\right)+\gamma_{n}\left(s\right)\right)\leq\left|\tilde{u}\left(t\right)-\tilde{u}\left(s\right)\right|-\frac{1}{2}\left(\gamma\left(t\right)+\gamma\left(s\right)\right)+\varepsilon,\label{eq:krowa1}
\end{equation}
\begin{equation}
\left|\tilde{u}_{n}\left(t\right)-\tilde{u}_{n}\left(s\right)\right|-\frac{1}{2}\left(\gamma_{n}\left(t\right)+\gamma_{n}\left(s\right)\right)\geq\left|\tilde{u}\left(t\right)-\tilde{u}\left(s\right)\right|-\frac{1}{2}\left(\gamma\left(t\right)+\gamma\left(s\right)\right)-\varepsilon.\label{eq:krowa1-1}
\end{equation}
(Here $\tilde{u}=u-\frac{1}{2}\left(\alpha+\beta\right),$ $\gamma=\beta-\alpha,$
$\gamma_{0}=\inf_{t\geq0}\gamma\left(t\right),$ $\tilde{u}_{n}=u_{n}-\frac{1}{2}\left(\alpha_{n}+\beta_{n}\right),$
$\gamma_{n}=\beta_{n}-\alpha_{n}.$) Let $\varepsilon<\gamma_{0}/2.$
For $n\geq n\left(\varepsilon\right),$ $t\in\left(0;T\right]$ let
us consider a partition $0\leq t_{0}<t_{1}<\ldots<t_{N}\leq t$ such
that 
\begin{equation}
TV^{\alpha_{n},\beta_{n}}\left(u_{n},\left[0;t\right]\right)\leq\sum_{i=1}^{N}\left(\left|\tilde{u}_{n}\left(t_{i}\right)-\tilde{u}_{n}\left(t_{i-1}\right)\right|-\frac{1}{2}\left(\gamma_{n}\left(t_{i}\right)+\gamma_{n}\left(t_{i-1}\right)\right)\right)_{+}+\varepsilon.\label{eq:approx_tv}
\end{equation}
By (\ref{eq:krowa1}), for every $i=1,2,\ldots,N$ such that 
\[
\left|\tilde{u}_{n}\left(t_{i}\right)-\tilde{u}_{n}\left(t_{i-1}\right)\right|-\frac{1}{2}\left(\gamma_{n}\left(t_{i}\right)+\gamma_{n}\left(t_{i-1}\right)\right)>0
\]
we have that 
\[
\left|\tilde{u}\left(t_{i}\right)-\tilde{u}\left(t_{i-1}\right)\right|\geq\frac{1}{2}\left(\gamma\left(t_{i}\right)+\gamma\left(t_{i-1}\right)\right)-\varepsilon>\frac{1}{2}\gamma_{0}.
\]
Now, by (\ref{eq:uniform_osc}) we have at most $N\left(T,\gamma_{0}/2\right)$
summands in (\ref{eq:approx_tv}) which are non-zero and by (\ref{eq:krowa1})
we have 
\begin{align*}
TV^{\alpha_{n},\beta_{n}}\left(u_{n},\left[0;t\right]\right) & \leq\sum_{i=1}^{N}\left(\left|\tilde{u}\left(t_{i}\right)-\tilde{u}\left(t_{i-1}\right)\right|-\frac{1}{2}\left(\gamma_{n}\left(t_{i}\right)+\gamma_{n}\left(t_{i-1}\right)\right)\right)_{+}\\
 & +\left(N\left(T,\gamma_{0}/2\right)+1\right)\varepsilon\\
 & \leq TV^{\alpha,\beta}\left(u,\left[0;t\right]\right)+\left(N\left(T,\gamma_{0}/2\right)+1\right)\varepsilon.
\end{align*}
On the other hand, every sum approximating $TV^{\alpha,\beta}\left(u,\left[0;t\right]\right)$
with arbitrary accuracy has at most $N\left(T,\gamma_{0}\right)$
non-zero terms and each of these terms is approximated for $n\geq n\left(\varepsilon\right)$
by $\left|\tilde{u}_{n}\left(t_{i}\right)-\tilde{u}_{n}\left(t_{i-1}\right)\right|-\frac{1}{2}\left(\gamma_{n}\left(t_{i}\right)+\gamma_{n}\left(t_{i-1}\right)\right)$
with accuracy $\varepsilon.$ Thus, for $n\geq n\left(\varepsilon\right),$
\[
TV^{\alpha,\beta}\left(u,\left[0;t\right]\right)\leq TV^{\alpha_{n},\beta_{n}}\left(u_{n},\left[0;t\right]\right)+N\left(T,\gamma_{0}\right)\varepsilon.
\]
Letting $\varepsilon\downarrow0$ we get the claimed uniform convergence
of $TV^{\alpha_{n},\beta_{n}}\left(u_{n},\left[0;\cdot\right]\right)$
on compacts. Similarly we obtain the claimed convergences of $UTV^{\alpha_{n},\beta_{n}}\left(u_{n},\left[0;\cdot\right]\right)$
and $DTV^{\alpha_{n},\beta_{n}}\left(u_{n},\left[0;\cdot\right]\right).$

The convergences for total, positive and negative variations of the
outputs of the play operators $\mathfrak{p}_{\alpha_{n},\beta_{n}}\left[\xi_{n}^{0},u_{n}\right]$
are obtained in a similar way, using the representation of Corollary \ref{Play_UTVDTV-1}.\end{proof}

\section*{Appendix. Proof of Lemma \ref{lema1} }

\begin{proof} From the estimate: for $\delta\in\left(0;1\right),$
$c>0$ and $x,y\in\mathbb{R}$ 
\[
\left(\left|x+y\right|-c\right)_{+}\leq\left(\left|x\right|-\delta c\right)_{+}+\left(\left|y\right|-\left(1-\delta\right)c\right)_{+}
\]
we immediately get 
\begin{equation}
TV^{c}\left(\psi_{1}+\psi_{2},\left[0;t\right]\right)\leq TV^{\delta c}\left(\psi_{1},\left[0;t\right]\right)+TV^{\left(1-\delta\right)c}\left(\psi_{2},\left[0;t\right]\right)\label{eq:estt1}
\end{equation}
and 
\[
TV^{c}\left(\psi_{1}+\psi_{2}-\psi_{2},\left[0;t\right]\right)\leq TV^{\delta c}\left(\psi_{1}+\psi_{2},\left[0;t\right]\right)+TV^{\left(1-\delta\right)c}\left(-\psi_{2},\left[0;t\right]\right),
\]
which simplifies to 
\begin{equation}
TV^{\delta c}\left(\psi_{1}+\psi_{2},\left[0;t\right]\right)\geq TV^{c}\left(\psi_{1},\left[0;t\right]\right)-TV^{\left(1-\delta\right)c}\left(\psi_{2},\left[0;t\right]\right).\label{eq:estt2}
\end{equation}
Now, let $\theta$ be the index of the function $\varphi,$ which means that   $\lim_{c\downarrow0}\varphi\left(c\right)/\varphi\left(\delta c\right)=\delta^{-\theta}$ (c.f. \cite[p. 18, Theorem 1.4.3 and definitions above]{BGT}).
Since $\varphi$ is non-decreasing, we have $\theta\geq0.$ Now, from
(\ref{eq:estt1}) we get 
\[
\limsup_{c\downarrow0}\varphi\left(c\right)\cdot TV^{c}\left(\psi_{1}+\psi_{2},\left[0;t\right]\right)\leq\delta^{-\theta}\limsup_{c\downarrow0}\varphi\left(c\right)\cdot TV^{c}\left(\psi_{1},\left[0;t\right]\right)
\]
and from (\ref{eq:estt2}) we get 
\[
\liminf_{c\downarrow0}\varphi\left(c\right)\cdot TV^{c}\left(\psi_{1}+\psi_{2},\left[0;t\right]\right)\geq\delta^{\theta}\liminf_{c\downarrow0}\varphi\left(c\right)\cdot TV^{c}\left(\psi_{1},\left[0;t\right]\right).
\]
Setting $\delta$ arbitrary close to $1$ we get the result. 
\end{proof}

\bf{Acknowledgments} The authors would like to thank Professors Martin Brokate, Pavel Krej\v{c}\'{i}, Vincenzo Recupero and anonymous referee for interesting remarks and references about the play operator. 



\begin{thebibliography}{00}


\bibitem{BerM} Bertotti G., Mayergoyz I. D. eds. {\em The Science
of Hysteresis.} Academic Press 2005.

\bibitem{BGT} Bingham N. H., Goldie C. M. and Teugels, J. L. {\em Regular variation.} Volume 27 of the Encyclopedia of Mathematics and Its Applications. Cambridge University Press, 1987.

\bibitem{BurdzySPA} Burdzy K., Weining K., Ramanan K. 
{\em The Skorokhod problem in a time-dependent interval.} Stochastic Process.
Appl. 2009; {\textbf 119}(2):428--452. DOI: 10.1016/j.spa.2008.03.001.

\bibitem{BK} Brokate M., Krej\v{c}\'i P. {\em Duality in the space
of regulated functions and the play operator.} Math. Z. 2003; {\bf 24}(4):667--688. DOI: 10.1007/s00209-003-0563-6.

\bibitem{ChK92} Chernorutski\v{\i} V. V., Krasnosel'ski\v{\i}
M. A. {\em Hysteresis systems with variable characteristics.} Nonlinear
Anal. 1992; {\bf 18}(6):543--557. DOI: 10.1016/0362-546X(92)90209-W.

\bibitem{IE} Iyer R. , Ekanayake D. {\em Extension of hysteresis operators
of Preisach-type to real, Lebesgue measurable functions.} Physica B:
Condensed Matter; 2008 {\bf 403}(2-3):437--439. DOI: 10.1016/j.physb.2007.08.069.

\bibitem{KP89} Krasnosel'ski\v{\i}
M. A., Pokrovskii A.V. {\em Systems with Hysteresis.}, Springer-Verlag, Berlin 1989.

\bibitem{K0} Krej\v{c}\'i P. {\em Hysteresis, convexity and dissipation
in hyperbolic equations.} Gakuto Int. Ser. Math. Sci. Appl., Vol. 8,
Gak\={k}otosho, Tokyo, 1996.

\bibitem{K1} Krej\v{c}\'i P. {\em The Kurzweil integral and hysteresis.}
Journal of Physics: Conference Series; 2006 {\bf 55}:144--154. DOI: 10.1088/1742-6596/55/1/014.

\bibitem{KL1} Krej\v{c}\'i P., Lauren\c{c}ot Ph. {\em Hysteresis
filtering in the space of bounded measurable functions.} Boll. Unione
Mat. Ital. 2002; 5-B(3):755--772.

\bibitem{KL2} Krej\v{c}\'i, P., Lauren\c{c}ot, Ph. {\em
Generalized variational inequalities.} J. Convex Anal. 2002; {\bf 9}:159--183. 

\bibitem{Levy} L\'evy P. {\em Le mouvement brownien plan.} Amer. J. Math. 1940; 
{\bf 62}(1):487--550.

\bibitem{L1} \L{}ochowski R. M. \emph{On the Generalisation
of the Hahn-Jordan Decomposition for Real c\`{a}dl\`{a}g Functions.} Colloq. Math. 2013; {\bf 132}(1):121--138. DOI: 10.4064/cm132-1-10. 

\bibitem{L11} \L{}ochowski R. M. {\em Truncated variation, upward
truncated variation and downward truncated variation of Brownian motion
with drift - their characteristics and applications.} Stoch. Process. Appl. 2011; {\bf 121}(2):378--393. DOI: 10.1016/j.spa.2010.10.005.

\bibitem{LM13} \L{}ochowski R. M., Mi\l{}o\'{s}, P. {\em On
truncated variation, upward truncated variation and downward truncated
variation for diffusions.} Stoch. Process. Appl. 2013; {\bf 123}(2):446--474. DOI: 10.1016/j.spa.2012.08.007.

\bibitem{P08} Picard J. {\em  A tree approach to p-variation and to
integration.} Ann. Probab. 2008; {\bf 36}(6):2235--2279. DOI: 10.1214/07-AOP388.

\bibitem{R2007} Recupero V. {\em The play operator on the rectifiable curves in a Hilbert space.}Math. Methods Appl. Sci. 2007; {\bf 31}(11):1283--1295. DOI: 10.1002/mma.968.

\bibitem{R2008} Recupero V. {\em Lebesgue-Stieltjes measures
and the play operator.} Journal of Physics: Conference Series. 2008; {\bf 138}:
12--22. DOI: 10.1088/1742-6596/138/1/012022.

\bibitem{TV00} Tronel G., Vladimirov A. A. {\em On BV-type hysteresis
operators.} Nonlinear Anal. Ser. A: Theory Methods. 2000; {\bf 39}(1):
79--98. DOI: 10.1016/S0362-546X(98)00168-0.

\bibitem{V} Visintin A., {\em Differential Models of Hysteresis.} Vol. 111 of Applied Mathematical Sciences, Springer, Berlin 1994.

\bibitem{WW97} Whalley A. E., Wilmott P. {\em An
Asymptotic Analysis of an Optimal Hedging Model for Option Pricing
with Transaction.} Math. Finance. 1997;
{\bf 7}(3):307--324. DOI: 10.1111/1467-9965.00034.

\bibitem{WW99} Whalley A. E., Wilmott P. {\em Optimal Hedging of
Options with Small but Arbitrary Transaction Cost Structure.} European
Journal of Appl. Mathematics. 1999; {\bf 10}:117--139.

\bibitem{Za06} Zakamouline V. I. {\em Efficient Analytic Approximation
of the Optimal Hedging Strategy for a European Call Option with Transaction
Costs.} Quant. Finance. 2006; {\bf 6}(5):435--445. DOI: 10.2139/ssrn.889563.


\end{thebibliography}
\end{document}